   \def\MR#1{}
\numberwithin{equation}{section}
\theoremstyle{plain}
\newtheorem{thm}{Theorem}[section]
\newtheorem{lemma}[thm]{Lemma}
\newtheorem{prop}[thm]{Proposition}
\newtheorem{cor}[thm]{Corollary}
\theoremstyle{definition}
\theoremstyle{remark}
\newtheorem{rmk}[thm]{Remark}
\newcommand{\Hb}{\mathbb{H}}
\newcommand{\Zb}{\mathbb{Z}}
\newcommand{\SL}{{\mathrm{SL}}}
\newcommand{\PSL}{{\mathrm{PSL}}}
\newcommand{\Cb}{\mathbb{C}}
\newcommand{\Qb}{\mathbb{Q}}
\newcommand{\pf}{\mathfrak{p}}
\newcommand{\Nb}{\mathbb{N}}
\newcommand{\Rb}{\mathbb{R}}
\newcommand{\Nm}{{\mathrm{Nm}}}
\newcommand{\Fc}{{\mathcal{F}}}
\newcommand{\tth}{\textsuperscript{th }}
\newcommand{\lp}{\left (}
\newcommand{\rp}{\right )}
\newcommand{\Oc}{\mathcal{O}}
\newcommand{\Cl}{{\mathrm{Cl}}}
\newcommand{\Gal}{{\mathrm{Gal}}}
\newcommand{\af}{\mathfrak{a}}
\newcommand{\ef}{\mathfrak{e}}
\newcommand{\Cc}{\mathcal{C}}
\newcommand{\Tr}{\mathrm{Tr}}
\newcommand{\ebf}{{\mathbf{e}}}
\newcommand{\Ab}{{\mathbb{A}}}
\newcommand{\zm}{\mathrm{z}}
\newcommand{\pmat}[4]{\begin{pmatrix}
                 #1 & #2\\
                 #3 & #4
\end{pmatrix}}
\begin{document}
\title{Singular Units and Isogenies Between CM Elliptic Curves}
\author[Yingkun Li]{Yingkun Li}
\address{Fachbereich Mathematik,
Technische Universit\"at Darmstadt, Schlossgartenstrasse 7, D--64289
Darmstadt, Germany}
\email{li@mathematik.tu-darmstadt.de}
\thanks{The author is partially supported by the LOEWE research unit USAG.\\
  MSC 2020: 11F30, 11G15, 14K02. Keywords: singular moduli, higher Green's function.}

\date{\today}
\maketitle
\begin{abstract}
In this note, we will apply the results of Gross-Zagier, Gross-Kohnen-Zagier and their generalizations to give a short proof that the differences of singular moduli are not units. As a consequence, we obtain a result on isogenies between reductions of CM elliptic curves. 
\end{abstract}

\section{Introduction}
Let $\Hb$ be the complex upper half-plane, which is acted on discretely by the group $\Gamma := \PSL_2(\Zb)$. The modular curve $Y(1) := \Gamma \backslash \Hb$ is the coarse moduli space of isomorphism classes of elliptic curves over $\Cb$. 
The Klein-$j$ invariant provides the uniformization 
$$
j: Y(1) \to \Cb.
$$
Let $\zm \in Y(1)$ be a CM point of discriminant $d < 0$, i.e.\ it corresponds to an elliptic curve $E_\zm$ with complex multiplication by the order $\Oc_d := \Zb + \tfrac{d + \sqrt{d}}{2}\Zb$ in the imaginary quadratic field $K = \Qb(\sqrt{d})$. 
Then $j(\zm)$ is called a \textit{singular modulus}.
The classical theory of complex multiplication tells us that $j(\zm)$ is an algebraic integer generating the ring class field $H_d$ of $K$, which corresponds to the order $\Oc_d$ via class field theory.
The factorization of the difference of singular moduli was the subject of the seminal work by Gross and Zagier \cite{GZ85} and has interesting implications for the CM elliptic curve $E_\zm$. For example, if $j(\zm)$ is divisible by a prime $\pf$ in $H_d$, then the reduction of $E_\zm$ modulo $\pf$ is isomorphic to the reduction of the CM elliptic curve
$$
E: y^2 = x^3 - 1,
$$
whose corresponding $j$-invariant is zero.

More generally for $m \ge 1$, let $\varphi_m(X, Y) \in \Zb[X, Y]$ be the modular polynomial defined by 
\begin{equation}
  \label{eq:modpol}
  \varphi_m(j(z_1), j(z_2)) := \prod_{\begin{subarray}{c} \gamma \in \Gamma \backslash \Gamma_m \\ \end{subarray}} (j(z_1) - j(\gamma z_2)),
\end{equation}
where $\Gamma_m$ consists of matrices in $\mathrm{PGL}_2(\Zb)$ with determinant equals to $\pm m$, and is acted on by $\Gamma$ via multiplication on the left.
For example, $\varphi_1(X, Y)$ is simply $X - Y$.
A prime that divides $\varphi_m(j(\zm_1), j(\zm_2))$ for two CM points $\zm_1, \zm_2$ then gives us a finite field, over which the reductions of $E_{\zm_1}$ and $E_{\zm_2}$ are $m$-isogenous.

In this note, we will apply the results by Gross-Zagier \cite{GZ85}, Gross-Kohnen-Zagier \cite{GKZ87} and their generalizations by Schofer \cite{Schofer}, Bruinier-Kudla-Yang \cite{BKY12}, Bruinier-Ehlen-Yang \cite{BEY} to prove the following result.
\begin{thm}
\label{thm:main}
Let $m \in \Nb$ and $\zm_1, \zm_2 \in Y(1)$ be CM points of discriminants $d_1, d_2 < 0$, and $H$ the composite of the ring class fields $H_{d_i}$. Then the norm of the algebraic integer $\varphi_m(j(\zm_1), j(\zm_2)) \in H$, if non-zero, satisfies the lower bound 
\begin{equation}
  \label{eq:lowerbd}
\log  |  \Nm_{H/\Qb} \varphi_m(j(\zm_1), j(\zm_2))| \ge
 |Z(W) \cap T_{m, \epsilon}|
\cdot  Q_2(\cosh^{} (\sqrt{2} \epsilon))
\end{equation}
 for any $\epsilon > 0$.
Here $Q_{s-1}$ is the Legendre function of the second kind, $Z(W) \subset \Fc^2 \subset \Hb^2$ the set of Galois conjugates of $(\zm_1, \zm_2)$ (see \eqref{eq:bigCM1} and \eqref{eq:ZW2}) with $\Fc \subset \Hb$ a fundamental domain of $Y(1)$, and $T_{m, \epsilon} \subset \Hb^2$ the $\epsilon$-neighborhood (with respect to the Riemannian metric on $\Hb^2$) of the graph of the $m$\tth Hecke correspondence.
\end{thm}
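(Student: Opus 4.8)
\noindent\textit{Proof strategy.} The plan is to convert the norm into a sum over the finite Galois orbit $Z(W)$, to identify that sum -- up to explicit terms of a favourable sign -- with (a multiple of) the value on $Z(W)$ of the automorphic Green function of the Hecke divisor $T_m$, and then to read off \eqref{eq:lowerbd} from the positivity and monotonicity of the Legendre--function series representing this Green function. For the first step, note that since $\varphi_m$ has rational integer coefficients and the $j$-invariants of the Galois conjugates of a CM point are exactly the Galois conjugates of $j(\zm_i)$, the $\Qb$-conjugates of $\varphi_m(j(\zm_1),j(\zm_2))$ are precisely the numbers $\varphi_m(j(z_1),j(z_2))$ with $(z_1,z_2)$ running over $Z(W)$ (with constant multiplicity $[H:\Qb]/|Z(W)|$). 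Hence
\[
\log\bigl|\Nm_{H/\Qb}\varphi_m(j(\zm_1),j(\zm_2))\bigr|\;=\;\sum_{(z_1,z_2)\in Z(W)}\log\bigl|\varphi_m(j(z_1),j(z_2))\bigr|,
\]
and because $\varphi_m(j(\zm_1),j(\zm_2))$ is a nonzero algebraic integer this sum is $\ge 0$; the content is to improve this trivial bound by an amount that detects how close the conjugate pairs come to the $m$-isogeny locus.

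Next I would bring in the orthogonal picture. One realizes $Y(1)\times Y(1)$ as a component of the Shimura variety attached to the quadratic space $(M_2(\Qb),\det)$ of signature $(2,2)$; the graph of the $m$\tth Hecke correspondence is then the special divisor $T_m$ cut out by vectors of norm $m$, and $(\zm_1,\zm_2)$ together with its conjugates is the big CM cycle $Z(W)$ attached to a rational subspace $W\subset M_2(\Qb)$. By the theory of regularized theta lifts and Borcherds products, $\varphi_m(j(z_1),j(z_2))$ differs from the Borcherds lift with divisor $T_m$ only by an explicit factor supported at the cusps, so that $-2\log|\varphi_m(j(z_1),j(z_2))|$ agrees, up to that factor, with the automorphic Green function $\Phi_m$ of $T_m$. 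Feeding this into the arithmetic Gross--Zagier formalism and the CM-value computations of Gross--Zagier, Gross--Kohnen--Zagier, Schofer, Bruinier--Kudla--Yang and Bruinier--Ehlen--Yang -- which evaluate $\sum_{(z_1,z_2)\in Z(W)}\Phi_m(z_1,z_2)$ and the cuspidal/period correction, and in which the remaining non-archimedean contributions (the intersection multiplicities of $T_m$ with $Z(W)$ at finite places, i.e.\ the supersingular coincidences between the reductions of $E_{\zm_1}$ and $E_{\zm_2}$) are manifestly nonnegative -- one is reduced to the purely archimedean lower bound
\[
\sum_{(z_1,z_2)\in Z(W)}\Phi_m(z_1,z_2)\;\ge\;4\,|Z(W)\cap T_{m,\epsilon}|\cdot Q_2\bigl(\cosh(\sqrt2\,\epsilon)\bigr).
\]

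This last inequality is the geometric heart of the argument. In the relevant range of its spectral parameter $\Phi_m$ is represented by an absolutely convergent series of \emph{positive} terms,
\[
\Phi_m(z_1,z_2)\;=\;\sum_{\substack{x\in M_2(\Zb)\\ \det x=m}}c\;Q_2\!\Bigl(1+\frac{R(x,z_1,z_2)}{2m}\Bigr),
\]
indexed by the norm-$m$ vectors, where $c>0$ and $R(x,z_1,z_2)\ge 0$ vanishes exactly on the branch $Z(x)$ of $T_m$ determined by $x$; a direct computation with the product metric on $\Hb^2$ identifies $1+R(x,z_1,z_2)/(2m)$ with $\cosh(\sqrt2\,d)$, where $d$ is the $\Hb^2$-distance from $(z_1,z_2)$ to $Z(x)$, the factor $\sqrt2$ reflecting that a branch of $T_m$ carries $\sqrt2$ times the hyperbolic metric of a single factor. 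Discarding all terms except those $x$ for which $Z(x)$ meets the $\epsilon$-neighbourhood of a point of $Z(W)$, using that $Q_2$ is positive and strictly decreasing on $(1,\infty)$, and observing that every point of $Z(W)\cap T_{m,\epsilon}$ lies within $\epsilon$ of at least one such $Z(x)$, yields the displayed bound (with $c$ normalized so that after Step~2 the overall constant is $2$), and hence \eqref{eq:lowerbd}; since $\epsilon>0$ was arbitrary this gives the full family of bounds.

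The step I expect to be genuinely delicate is the middle one. It requires making the comparison between $\varphi_m(j(z_1),j(z_2))$ and the Borcherds lift completely explicit, computing the resulting cusp/period correction (which, together with the Chowla--Selberg-type CM period of the modular discriminant appearing in it, is where the normalization -- and ultimately the constant $2$ -- is pinned down), and verifying that this correction and the finite-place intersection numbers enter with the sign that \emph{preserves} rather than reverses the inequality. All of this is precisely the information packaged in the CM-value formulas of the works cited above, so the real task is to organize that input so that the clean, $\epsilon$-uniform bound \eqref{eq:lowerbd} falls out.
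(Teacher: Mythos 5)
Your first step and your final archimedean estimate are essentially sound (the paper makes the same distance computation, deducing from the triangle inequality that a point of $Z(W)\cap T_{m,\epsilon}$ admits $\gamma\in\Gamma_m$ with $\mathrm{d}(z_1,\gamma z_2)<\sqrt2\,\epsilon$), and the identification of the $\Qb$-conjugates of $\varphi_m(j(\zm_1),j(\zm_2))$ with the points of $Z(W)$ is exactly what Proposition \ref{prop:classgp} is there to justify. The genuine gap is your middle step. The function whose value on $Z(W)$ computes $\log|\Nm_{H/\Qb}\varphi_m(j(\zm_1),j(\zm_2))|$ is the Green function at spectral parameter $s=1$, namely $G^m_1(z_1,z_2)=2\log|\varphi_m(j(z_1),j(z_2))|$; at $s=1$ the Poincar\'e series \eqref{eq:Gs} does not converge, the function is only defined by analytic continuation, and it is not of one sign. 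The object that \emph{is} an absolutely convergent series of positive terms involving $Q_2$ is the higher Green function $-G^m_3$ (spectral parameter $s=3$), and this is a genuinely different function: it is not $-2\log$ of any Borcherds product, and its value at an individual point has no direct relation to $\log|\varphi_m(j(z_1),j(z_2))|$. Your proposal silently identifies the two ("in the relevant range of its spectral parameter"), and the resulting intermediate inequality cannot hold with your normalization $\Phi_m\approx-2\log|\varphi_m|$: its left-hand side is a negative multiple of $\log|\Nm_{H/\Qb}\varphi_m|\ge 0$ (the norm is a nonzero rational integer), while its right-hand side is strictly positive once $\epsilon$ is large. Nonnegativity of the finite-place contributions in an arithmetic intersection formula bounds the archimedean term from \emph{above}, not below, so no sign bookkeeping of that kind can close this gap.

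What is missing is the bridge the paper actually uses: evaluate \emph{both} CM values by Theorem \ref{thm:GKZ1} (resp.\ Theorem \ref{thm:GKZ2} and Proposition \ref{prop:FE} when $d_1d_2$ is a square). This gives
\begin{equation*}
2\log|\Nm_{H/\Qb}\varphi_m(j(\zm_1),j(\zm_2))|=\frac{|Z(W)|}{2\Lambda(0,\chi)}\sum_{t\in S_m}\bigl(-a(t,\phi)\bigr),
\qquad
-G^m_3(Z(W))=\frac{|Z(W)|}{2\Lambda(0,\chi)}\sum_{t\in S_m}\Bigl(-P_2\bigl(\tfrac{t-t'}{m}\bigr)\Bigr)\bigl(-a(t,\phi)\bigr),
\end{equation*}
i.e.\ the two quantities are linear combinations of the \emph{same} coefficients $-a(t,\phi)\ge 0$ (Proposition \ref{prop:nonneg}), weighted by the Legendre polynomials $P_0\equiv 1$ and $P_2$. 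Since $-P_2(r)\le\tfrac12$ on $[-1,1]$, one gets $2\log|\Nm_{H/\Qb}\varphi_m|\ge 2\,(-G^m_3(Z(W)))$, and only at this point does your (correct) geometric estimate of the convergent $Q_2$-series for $-G^m_3(Z(W))$ produce \eqref{eq:lowerbd}. This comparison of Legendre-polynomial weights inside the Eisenstein Fourier coefficients, using the higher-weight Green function as an auxiliary object with its own CM-value formula, is the key idea of the proof; your proposal never introduces it, and without it the $Q_2$-bound is never connected to $\log|\Nm_{H/\Qb}\varphi_m|$.
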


\begin{rmk}
  From the definition of $Q_{s-1}$ in \eqref{eq:Qs}, it is clear that $Q_{s-1}(t)$ is positive and monotonic for $s, t > 1$. 
In fact, $Q_{s-1}(t) \gg_s t^{-s} \log \frac{t + 1}{t-1}$.
We can therefore make $\epsilon$ large enough such that $Z(W) \cap {T}_{m, \epsilon} \neq \emptyset$.
\end{rmk}

In \cite{BHK18}, it was shown that $j(\zm)$ is not a unit for any CM point $\zm \in Y(1)$. This was an improvement over an earlier result for all (ineffectively) large discriminants in \cite{Habegger15}.
By specializing Theorem \ref{thm:main} to $m = 1$ and $d_1 = -3$, we recover the main result of \cite{BHK18}. By allowing both CM points to vary, we actually have the following more general result.

\begin{cor}
  \label{cor:BHK}
For any CM points $\zm_1, \zm_2 \in Y(1)$ and integer $m \ge 1$, the algebraic integer $\varphi_m(j(\zm_1) , j(\zm_2))$ is never a unit. 
\end{cor}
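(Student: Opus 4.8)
The plan is to read Corollary~\ref{cor:BHK} off the inequality \eqref{eq:lowerbd}. Set $\alpha := \varphi_m(j(\zm_1), j(\zm_2)) \in H$. If $\alpha = 0$ there is nothing to prove, as $0$ is not a unit; so I assume $\alpha \neq 0$, which is precisely the case in which Theorem~\ref{thm:main} provides a bound. Suppose toward a contradiction that $\alpha$ is a unit. Then $\alpha^{-1}$ is again an algebraic integer lying in $H$, so $\Nm_{H/\Qb}(\alpha)$ and $\Nm_{H/\Qb}(\alpha)^{-1} = \Nm_{H/\Qb}(\alpha^{-1})$ are both rational integers; hence $\Nm_{H/\Qb}(\alpha) = \pm 1$ and the left-hand side of \eqref{eq:lowerbd} is $0$.

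It then remains to check that the right-hand side of \eqref{eq:lowerbd} can be made strictly positive for a suitable $\epsilon > 0$. For every $\epsilon > 0$ one has $\cosh(\sqrt 2\,\epsilon) > 1$, and by the Remark following Theorem~\ref{thm:main} the Legendre function $Q_2$ is strictly positive on $(1, \infty)$, so the factor $Q_2(\cosh(\sqrt 2\,\epsilon))$ is always positive; thus it suffices to produce $\epsilon$ with $Z(W) \cap T_{m,\epsilon} \neq \emptyset$. Now $Z(W)$ is a nonempty finite subset of $\Fc^2$: it contains a representative of $(\zm_1,\zm_2)$, and its cardinality is at most $[H:\Qb]$. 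Fix $(z_1, z_2) \in Z(W)$. Since the graph of the $m$\tth Hecke correspondence surjects onto the first $\Hb$-factor (e.g.\ $(z_1, m z_1)$ lies on it), choosing $\epsilon$ larger than the Riemannian distance from $(z_1,z_2)$ to some such point puts $(z_1, z_2) \in T_{m,\epsilon}$. For this $\epsilon$ the right-hand side of \eqref{eq:lowerbd} is positive, contradicting the vanishing computed above; hence $\alpha$ is not a unit.

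Specializing $m = 1$ and $d_1 = -3$ gives $j(\zm_1) = 0$ and $\varphi_1(0, j(\zm_2)) = -j(\zm_2)$, so the corollary recovers the result of \cite{BHK18} that a singular modulus is never a unit. Given Theorem~\ref{thm:main}, this deduction is essentially formal; the only point that asks for any argument is the existence of an $\epsilon$ with $Z(W) \cap T_{m,\epsilon} \neq \emptyset$, and this is the main (very mild) obstacle --- it amounts to observing that the thickened Hecke graph eventually engulfs the fixed finite Galois orbit $Z(W)$, which follows from the surjectivity of the $m$\tth Hecke correspondence onto each $\Hb$-factor.
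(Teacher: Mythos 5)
Your proposal is correct and follows essentially the same route as the paper: the corollary is read off Theorem \ref{thm:main} together with the remark after it, since a unit forces $\log|\Nm_{H/\Qb}\varphi_m(j(\zm_1),j(\zm_2))| = 0$ while a sufficiently large $\epsilon$ makes $Z(W)\cap T_{m,\epsilon}\neq\emptyset$ and hence the right-hand side of \eqref{eq:lowerbd} strictly positive. Your only addition is to spell out the ``large $\epsilon$'' step via the point $(z_1, m z_1)$ on the Hecke graph, which is a fine (and correct) way to justify what the paper's remark asserts.
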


\begin{rmk}
  \label{rmk:1}
When one of the discriminants is fixed, the proof in \cite{BHK18} can be adapted to prove the result above. However, this involves eliminating finitely many cases by computer calculation, and it is not clear if the same strategy works with both discriminants varying.
\end{rmk}

\begin{rmk}
  \label{rmk:2}
By definition, $\varphi_m(j(z_1, z_2)) = 0$ if and only if $(z_1, z_2)$ lies on the graph of the $m$\tth Hecke correspondence.
In particular, the corollary above implies that $\varphi_m(j(\zm), j(\zm))$ is never a unit for any CM point $\zm \in Y(1)$. 
Note that this value is not zero as long as $m$ is not a perfect square.
\end{rmk}
The results in \cite{Habegger15} and \cite{BHK18} originated from a question of Masser, which was motivated by effective results of Andr\'{e}-Oort type. As a generalization of Corollary 1.2 in \cite{BHK18}, we can deduce the following result from Theorem \ref{thm:main}.
\begin{cor}
  \label{cor:effective}
Let $P$ be a polynomial in unknowns $X_1,\dots, X_n$ with coefficients that are algebraic integers in $\Cb$. If $P$ is divisible by the $m$\tth modular polynomial $\varphi_m(X_i, X_j)$ for some $m \ge 1$ and $1 \le i \le j \le n$, then the subvariety in $\Cb^n$ defined by the equation $P(X_1, \dots, X_n) = 1$ contains no special points. 
\end{cor}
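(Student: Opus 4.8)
The plan is to obtain this as a short formal consequence of Corollary~\ref{cor:BHK}, with no new analytic input beyond Theorem~\ref{thm:main}. I would argue by contradiction: suppose the subvariety $\{P = 1\} \subset \Cb^n$ contains a special point $(\alpha_1, \dots, \alpha_n)$, so that each coordinate is a singular modulus, $\alpha_k = j(\zm_k)$ for some CM point $\zm_k \in Y(1)$. By the classical theory of complex multiplication recalled in the introduction, every $\alpha_k$ is an algebraic integer, and by hypothesis so are the coefficients of $P$; hence the value at $(\alpha_1,\dots,\alpha_n)$ of any polynomial with algebraic integer coefficients is again an algebraic integer.

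Next I would use the assumed divisibility to write $P = \varphi_m(X_i, X_j)\cdot R$, and the one point that needs a word of justification is that $R$ may be taken with algebraic integer coefficients. When $i \neq j$ this is immediate from the defining product \eqref{eq:modpol}: there $\varphi_m(X,Y)$ is monic of degree $|\Gamma \backslash \Gamma_m|$ as a polynomial in $X$ over $\Zb[Y]$, so $\varphi_m(X_i, X_j)$ has a coefficient equal to $1$, is primitive at every finite place of $\overline{\Qb}$, and Gauss's lemma over the integrally closed ring $\overline{\Zb}$ of all algebraic integers then forces $R \in \overline{\Zb}[X_1, \dots, X_n]$ out of $P \in \overline{\Zb}[X_1, \dots, X_n]$. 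When $i = j$ and $m$ is a perfect square, $\varphi_m(X_i, X_i)$ vanishes identically (the trivial self-isogeny contributes the factor $X_i - X_i$), so the hypothesis forces $P = 0$ and the statement is vacuous; when $i = j$ and $m$ is not a perfect square, $\varphi_m(X_i, X_i)$ is a nonzero one-variable polynomial of content $1$ (no prime $p$ divides all its coefficients, since not every elliptic curve in characteristic $p$ is $m$-isogenous to itself), and the same Gauss's lemma argument applies. In every non-vacuous case I can therefore evaluate the factorization at the special point, obtaining
\begin{equation*}
  1 \;=\; P(\alpha_1, \dots, \alpha_n) \;=\; \varphi_m\bigl(j(\zm_i), j(\zm_j)\bigr)\cdot R(\alpha_1, \dots, \alpha_n),
\end{equation*}
an equality between $1$ and a product of two algebraic integers.

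From this identity two things follow. First, $\varphi_m(j(\zm_i), j(\zm_j)) \neq 0$, since otherwise the left-hand side would vanish. Second, $\varphi_m(j(\zm_i), j(\zm_j))$ divides $1$ in $\overline{\Zb}$ and is therefore an algebraic unit. This contradicts Corollary~\ref{cor:BHK} applied to the CM points $\zm_i$ and $\zm_j$, which says exactly that a nonzero value $\varphi_m(j(\zm_i), j(\zm_j))$ is never a unit; hence $\{P = 1\}$ contains no special point. I do not expect a real obstacle here: the only substantive ingredient is Corollary~\ref{cor:BHK}, equivalently the positivity of the right-hand side of \eqref{eq:lowerbd} for suitably large $\epsilon$ (see the remark following Theorem~\ref{thm:main}), and within the elementary part the sole point demanding care is the integrality of the cofactor $R$, handled as above.
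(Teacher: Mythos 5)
Your argument is correct and is exactly the deduction the paper intends (the paper states this corollary without proof as a consequence of Corollary~\ref{cor:BHK}): evaluate the factorization at a special point, use integrality of singular moduli and of the cofactor to conclude $\varphi_m(j(\zm_i),j(\zm_j))$ is a nonzero unit, contradicting Corollary~\ref{cor:BHK}. Your care with the cofactor $R$ is appropriate, and in the case $i=j$ with $m$ not a square you could simply invoke the classical fact that $\varphi_m(X,X)$ has leading coefficient $\pm 1$, which gives primitivity directly.
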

By the discussion concerning isogenies between elliptic curves, Theorem \ref{thm:main} also implies the following result.

\begin{cor}
  \label{cor:isogeny}
For $i = 1, 2$, let $E_i$ be an elliptic curves with CM by the order $\Oc_{d_i}$.
For any $m \in \Nb$, there exists a prime $\pf$ of $H_{d_1}H_{d_2}$ such that the reductions of $E_1$ and $E_2$ modulo $\pf$ are $m$-isogenous. 
\end{cor}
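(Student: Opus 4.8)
\medskip
\noindent\textbf{Proof plan for Corollary \ref{cor:isogeny}.}
The plan is to deduce the statement from Corollary \ref{cor:BHK} via the correspondence between divisibility of modular polynomials and isogenies in positive characteristic recalled after \eqref{eq:modpol}. First, for $i = 1, 2$ I would pick a CM point $\zm_i \in Y(1)$ of discriminant $d_i$ representing the isomorphism class of $E_i$, so that $j(E_i) = j(\zm_i)$ is an algebraic integer in the ring class field $H_{d_i}$. With $H := H_{d_1}H_{d_2}$, the value $N := \varphi_m(j(\zm_1), j(\zm_2))$ is then an algebraic integer in $\Oc_H$.

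Next I would observe that $N$ is never a unit, so that some prime $\pf \subset \Oc_H$ divides it: if $N \neq 0$ this is precisely Corollary \ref{cor:BHK}, while if $N = 0$ it is trivial and in fact every prime divides $N$. Fix such a $\pf$. If $N = 0$, the defining identity \eqref{eq:modpol} already yields $j(\zm_1) = j(\gamma \zm_2)$ for some $\gamma \in \Gamma_m$, so $E_1$ and $E_2$ are linked by a cyclic $m$-isogeny over $\overline{\Qb}$, which one then reduces modulo a place above $\pf$. If instead $\pf \mid N \neq 0$, reduction modulo $\pf$ gives $\varphi_m(\overline{j(\zm_1)}, \overline{j(\zm_2)}) = 0$ in the residue field; since CM elliptic curves have potential good reduction everywhere and the $j$-invariants are integral, the reductions $\overline{E}_1, \overline{E}_2$ are well defined and their $j$-invariants satisfy the $m$\tth modular equation over a finite field. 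In either case $\overline{E}_1$ and $\overline{E}_2$ are cyclic $m$-isogenous over a finite field, which is the assertion.

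All of the real content lies in Corollary \ref{cor:BHK}, which is itself a specialization of the lower bound in Theorem \ref{thm:main}; the rest is only translation. The two points that need (routine) care are: making the reductions well defined, which follows from potential good reduction of CM elliptic curves together with the integrality of $j$; and the positive-characteristic form of the equivalence between vanishing of $\varphi_m$ and the existence of a cyclic $m$-isogeny, which carries the usual caveats about inseparable isogenies and supersingular reduction when $p \mid m$. I expect this last, entirely standard, bookkeeping to be the only genuine obstacle in a detailed write-up.
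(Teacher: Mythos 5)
Your proposal is correct and follows essentially the same route as the paper: the paper deduces the corollary from Theorem \ref{thm:main} (equivalently Corollary \ref{cor:BHK}) by noting that a prime dividing the non-unit $\varphi_m(j(\zm_1), j(\zm_2))$ yields a finite residue field over which the reductions are $m$-isogenous, exactly as you do, with the case $\varphi_m(j(\zm_1), j(\zm_2)) = 0$ handled by an isogeny already in characteristic $0$. Your added remarks on potential good reduction and the characteristic-$p$ caveats are routine bookkeeping the paper leaves implicit.
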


The idea of the proof of Theorem \ref{thm:main} is rather simple. 
In a nutshell, the result of Gross-Zagier expressed the left hand side of \eqref{eq:lowerbd} as a finite sum of non-negative quantities. 
Then a special case of the result of Gross-Kohnen-Zagier expressed the special value of higher Green's function as a different linear combination of these non-negative quantities. From its definition as a Poincar\'{e} series, the higher Green's function clearly never vanishes. This then tells us that these non-negative quantities are not all zero. One can even obtain a bound as in \eqref{eq:lowerbd}. 

In terms of Arakelov theory, the factorization of Gross-Zagier comes from explicitly calculating the archimedean and non-archimedean contributions to the self-intersection of Heegner points on the modular curve $Y(1)$, which add up to zero. 
The archimedean part is the negative of the norm of the difference of singular moduli, and the non-archimedean part gives the factorization. 
In the higher weight case, one would be calculating the self-intersection of Heegner cycles on Kuga-Sato varieties \cite{Zhang97}. 
This is still zero in some cases, and the local contributions are closely related to the case of modular curve. The advantage though is that the archimedean contribution in the higher weight setting is visibly non-zero from definition. We then use the non-archimedean contributions as a bridge to pass this information to the modular curve case.

Despite its simplicity, this novel idea is rather robust and most of the tools used are available in more general settings. In particular, we hope to apply this idea to study the genus 2 case and deduce analogues of results in \cite{HP17}.

In an earlier version, there were some conditions on the discriminants $d_1, d_2$, which are inherent in the results of Gross-Zagier and Gross-Kohnen-Zagier. These are now removed by the more general results by Schofer \cite{Schofer}, Bruinier-Kudla-Yang \cite{BKY12} and Bruinier-Ehlen-Yang \cite{BEY}. 
To apply these more general results, one needs to identify certain toric orbits of CM points with suitable Galois orbits. 
In the case of singular moduli, this works out nicely when $d_1, d_2$ are coprime and fundamental (see e.g.~ section 3.2 of \cite{YY19}). Otherwise, one can use the crucial fact that singular moduli generates ring class fields to still make suitable identifications.
This is contained in Proposition \ref{prop:classgp}, which is rather interesting and useful by itself, as one can use it to remove the conditions on the discriminants in the result of \cite{GZ85} and prove Conjecture 1.7 in \cite{LV15} (see section 4 of \cite{YY19} for the general strategy).

Another essential ingredient is the non-negativity of Fourier coefficients of certain incoherent Eisenstein series. When $d_1, d_2$ are coprime and fundamental, these Fourier coefficients are explicitly computed in \cite{GZ85}, from which it is clear that they are non-negative.
To compute these Fourier coefficients in general, one needs to evaluate certain local Whittaker integrals. Very general results in this regard have just become available in \cite{YYY18}, which we use here to deduce the non-negativity in Proposition \ref{prop:nonneg}. 

Note that it is crucial that $\varphi_m(j(z_1), j(z_2))$ is the Borcherds product associated to a modular function whose principal part Fourier coefficients are all non-negative. For other modular functions where this is not satisfied, it is very much possible that their CM values are algebraic, integral units (even very often \cite{YYY18})!

\textbf{Acknowledgement.} The author would like to thank Philipp Habegger for helpful comments. We would also like to thank Jan Bruinier for providing us with a preliminary draft of \cite{BEY}, and Tonghai Yang for helpful discussions about \cite{BKY12}, \cite{YY19} and \cite{YYY18}.
Finally, we would like to thank the anonymous referee for helpful comments. 
\section{Higher Green's Function}
The function $\log |j(z_1) - j(z_2)|^2$ is the Green's function for the diagonal on two copies of the modular curve. In \cite{GKZ87}, higher Green's functions were studied. 
For $\Re(s) > 1$, let 
\begin{equation}
  \label{eq:Qs}
Q_{s-1}(t) := \int^\infty_0 (t + \sqrt{t^2 - 1} \cosh v)^{-s} dv,~ t > 1
\end{equation}
 be the Legendre function of the second kind, which satisfies the ordinary differential equation 
$$
(1 - t^2) Q''(t) - 2t Q'(t) + s(s-1) Q(t) = 0.
$$
From the definition above, we see that for any fixed $s \in (1, \infty)$, the function $Q_{s-1}(t)$ is positive and monotonically decreasing for $t \in (1, \infty)$. 
Define a function $g_s$ on $\Hb^2$ by
\begin{equation}
  \label{eq:gs}
  g_s(z_1, z_2) := -2Q_{s-1} (\cosh \mathrm{d}(z_1, z_2)) = -2 Q_{s - 1} \lp 1 + \frac{|z_1 - z_2|^2}{2y_1y_2} \rp
\end{equation}
for $(z_1, z_2) \in \Hb^2$ with $\mathrm{d}(z_1, z_2)$ the hyperbolic distance between $z_1$ and $z_2$.
By averaging over the $\Gamma (=\PSL_2(\Zb))$-translates of the second variable, we obtain a function 
\begin{equation}
  \label{eq:Gs}
  G_s(z_1, z_2) := \sum_{\gamma \in \Gamma} g_s(z_1, \gamma z_2)
\end{equation}
on $Y(1)^2$ symmetric in $z_1$ and $z_2$.
Easy estimates show that the sum converges absolutely and uniformly on compact subsets of $\Hb^2$ when $\Re(s) > 1$. 
In that case, $G_s(z_1, z_2)$ is an eigenfunction of the hyperbolic Laplacian $\Delta_{z_i, 0}$ with eigenvalue $s(1-s)$, where
\begin{equation}
\label{eq:diffops}
\begin{split}
  R_{z, \kappa} &:= 2i \partial_z + \frac{\kappa}{y}, \;
L_{z, \kappa} := -2iy^2 \partial_{\overline{z}}, \\
\Delta_{z, \kappa} &:= - R_{z, \kappa - 2} L_{z, \kappa} = - L_{z, \kappa + 2} R_{z, \kappa} - \kappa = -y^2 \lp \partial_x^2 + \partial_y^2 \rp + i\kappa y (\partial_x + i \partial_y).
\end{split}
\end{equation}
for $\kappa \in \Zb$.
  When $s = 1$, the sum in \eqref{eq:Gs} does not converge absolutely any more. One can however analytically continue $G_s(z_1, z_2)$ to $s = 1$, where it will have a pole. After eliminating the pole using the real-analytic Eisenstein series, one obtains the function $2 \log|j(z_1) - j(z_2)|$ (see \cite[Prop.\ 5.1]{GZ85}).
  Therefore, we will define
  \begin{equation}
    \label{eq:G1}
    G_1(z_1, z_2) := -2 \log |j(z_1) - j(z_2)|
  \end{equation}
  for convenience later.

When $s = k \ge 1$ is an integer, the Legendre function $Q_{k-1}(t)$ has the form
\begin{equation}
  \label{eq:Qk}
Q_{k-1}(t) = \frac{P_{k-1}(t) }{2} \log \frac{t + 1}{t-1} - R_{k-1}(t)  ,
\end{equation}
where $P_{k-1}(t)$ is the $(k-1)^\mathrm{st}$ Legendre polynomial given by
\begin{equation}
  \label{eq:Pn}
  P_{k-1}(t) = \frac{1}{2^{k-1}}\sum_{\ell = 0}^{k-1} \binom{k-1}{\ell}^2 (t-1)^{k-1-\ell}(t+1)^{\ell}
\end{equation}
and $R_{k-1}(t)$ is a unique polynomial.
For $k = 1, 3, 5, 7$, they are given by
\begin{equation}
  \label{eq:Ppol}
  \begin{split}
    P_0(t) &= 1,~ R_2(t) =  0,\\ 
  P_2(t) &= \frac{3t^2 - 1}{2},~ R_2(t) =  \frac{3}{2}t,\\
  P_4(t) &= \frac{35t^4 - 30t^2 + 3}{8},~ R_4(t) = \frac{35}{8}t^3 - \frac{55}{24}t,\\
  P_6(t) &= \frac{231t^6 - 315t^4 + 105t^2 - 5}{16},~ R_6(t) = \frac{231}{16}t^5 - \frac{119}{8} t^3 + \frac{231}{80}.  
  \end{split}
\end{equation}
For $m \in \Nb$, we can let the $m$\tth Hecke operator $T_m$ act on one of $z_1$ and $z_2$ to define
\begin{equation}
  \label{eq:Gsm}
  G^m_{k}(z_1, z_2) := \sum_{\gamma \in \Gamma \backslash \Gamma_m} G_k(z_1, \gamma z_2).
\end{equation}
It has logarithmic singularity along the divisor
\begin{equation}
  \label{eq:Tc}
  \mathcal{T}_m := \{(z, \gamma \cdot z): z \in Y(1), \gamma \in \Gamma_m\} \subset Y(1)^2.
\end{equation}
Given any weakly holomorphic modular form $f \in M_{2-2k}^!$ with the Fourier expansion $ f(\tau) = \sum_{1 \le m \le m_0} c_f(-m)q^{-m} + O(1)$, we can define
\begin{equation}
  \label{eq:Gf}
  G_f(z_1, z_2) := \sum_{1 \le m \le m_0} c_f(-m) m^{k-1} G^m_k(z_1, z_2). 
\end{equation}
Note that if $k = 1$ and $f(\tau) = J_m(\tau) := q^{-m} + O(q)$ is the unique modular function in $M_0^!$, then $G_f(z_1, z_2) = -2 \log |\varphi_m(j(z_1), j(z_2))|$. 
In this case, Borcherds showed that $\log|G_f|$ is the the regularized theta lift of $f$ \cite{Borcherds98}. 
The extension of this result to all $k \ge 1$ can be stated as follows (see \cite{Bruinier02, Viazovska11}).
\begin{prop}[Prop.\ 4.2 in \cite{Li18}]
  \label{prop:Glift}
For an integer $r \ge 0$ and $\tau = u + iv \in \Hb$, denote $R^r_{\tau, \kappa} := R_{\tau, \kappa + 2r -2} \circ R_{\tau, \kappa + 2r -4} \circ \dots \circ R_{\tau, \kappa}$. 
Then for any $f \in M^!_{2-2k}$ and $z_1, z_2 \in \Hb$ with $k \ge 1$, we have
\begin{equation}
  \label{eq:Glift}
  G_f(z_1, z_2) = (4 \pi)^{1-k} \lim_{T \to \infty} \int^{}_{\Fc_T} f(\tau) (R^{k-1}_{\tau, 0}\Theta_L)(\tau; z_1, z_2) d\mu(\tau),
\end{equation}
where $\Fc_T := \Fc \cap \{\tau = u + iv \in \Hb: v \le T\}$ is the truncated fundamental domain and $\Theta_L(\tau; z_1, z_2)$ is the Siegel theta function associated to the  unimodular lattice $L = M_2(\Zb)$.
Furthermore, $G_f$ has logarithmic singularity along $\mathcal{T}_m$ for $m \ge 1$ if and only if $c_f(-m) \neq 0$. 
\end{prop}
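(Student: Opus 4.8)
The plan is to deduce this from Borcherds' theory of regularized theta lifts, treating the weight-zero case $k=1$ directly and then bootstrapping to general $k$ with the raising operator. I would begin by recording the relevant structure of the lattice: $L=M_2(\Zb)$ with quadratic form $Q=\det$ is an odd unimodular lattice of signature $(2,2)$, its Grassmannian of positive-definite $2$-planes is identified with $\Hb\times\Hb$ in such a way that $\mathrm{O}(L)$ acting on the second argument contains $\Gamma\times\Gamma$ together with the interchange $z_1\leftrightarrow z_2$, and $\Theta_L(\tau;z_1,z_2)$ is the attached Siegel theta kernel, so that the integrand in \eqref{eq:Glift} is a $\Gamma$-invariant function of $\tau$ for $f\in M^!_{2-2k}$. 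Decomposing $\Theta_L$ according to the value $Q(\lambda)=m$ of the lattice vector, the crucial combinatorial fact is that $\{\lambda\in L:Q(\lambda)=m\}/\Gamma$ is in bijection with $\Gamma\backslash\Gamma_m$, so that the special divisor $\lambda^\perp$ attached to a norm-$m$ vector is a branch of the Hecke correspondence, and the full special divisor of discriminant $m$ is exactly $\mathcal{T}_m$.

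For $k=1$ the identity is Borcherds' evaluation \cite{Borcherds98} of the regularized integral: for $f=J_m\in M_0^!$ one obtains $-2\log|\varphi_m(j(z_1),j(z_2))|=-G_f(z_1,z_2)$, which is consistent with \eqref{eq:G1} and \cite[Prop.\ 5.1]{GZ85}. For general $k\ge1$, I would apply the iterated raising operator $R^{k-1}_{\tau,0}$ to the theta kernel before integrating and evaluate the regularized integral by the Rankin--Selberg/Borcherds unfolding method, applied term by term to the $q$-expansion of $f$ against the theta decomposition. On the norm-$m$ part the remaining archimedean integral is a Laplace-type integral over $v=\mathrm{Im}\,\tau$ of the Gaussian attached to the majorant of a single vector $\lambda$ with $Q(\lambda)=m$; this is precisely a standard integral representation of $-2Q_{k-1}\bigl(\cosh\mathrm{d}(z_1,\gamma_\lambda z_2)\bigr)=g_k(z_1,\gamma_\lambda z_2)$, with the constant $(4\pi)^{1-k}$ accounting exactly for the powers of $4\pi$ introduced by $R^{k-1}_{\tau,0}$. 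Summing over the $\lambda$ of norm $m$ yields $G^m_k(z_1,z_2)$, and summing over $1\le m\le m_0$ with the principal-part coefficients of $f$ reproduces \eqref{eq:Gf}, i.e.\ $G_f$.

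Making this rigorous requires justifying the regularization. The integral $\int_{\Fc}f(\tau)\,(R^{k-1}_{\tau,0}\Theta_L)(\tau;z_1,z_2)\,d\mu(\tau)$ does not converge absolutely, since $f$ grows exponentially at the cusp while the $m=0$ part of $\Theta_L$ does not decay; truncating at $\mathrm{Im}\,\tau\le T$ and letting $T\to\infty$ extracts the finite value, and one checks as in \cite{Borcherds98, Bruinier02} that the $m\le0$ terms either vanish in the limit or cancel, and that the boundary contributions produced when commuting $R^{k-1}_{\tau,0}$ past the truncation (via Stokes' theorem) tend to $0$ because the constant term of $f$ is $O(1)$ while the nonconstant part of $\Theta_L$ is rapidly decreasing. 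For the last assertion I would invoke Borcherds' general description of the singularities of theta lifts along the special divisors $\lambda^\perp$: the operator $R^{k-1}_{\tau,0}$ acts only in $\tau$ and moves no divisor in the $(z_1,z_2)$-variables, and along $\lambda^\perp$ with $Q(\lambda)=m$ the leading singularity of the raised kernel is still of logarithmic type with coefficient proportional to $c_f(-m)$; hence $G_f$ has a logarithmic singularity along $\mathcal{T}_m$ precisely when $c_f(-m)\ne0$.

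The step I expect to be the main obstacle is the evaluation in the second paragraph: establishing the exact intertwining between the weight-raising operator $R^{k-1}_{\tau,0}$ in $\tau$ and the resulting differential operator in $(z_1,z_2)$ on the theta kernel, keeping precise track both of the constant $(4\pi)^{1-k}$ and of the lower-order (total-derivative) terms that $R^{k-1}_{\tau,0}$ generates, and then showing those terms contribute nothing after pairing with the weakly holomorphic $f$ and passing to the limit; equivalently, in the unfolded picture, this is the archimedean local identity matching the $v$-integral attached to one norm-$m$ vector with $-2Q_{k-1}(\cosh\mathrm{d}(z_1,\gamma_\lambda z_2))$. Once that special-function computation and the unfolding are in place, the remainder is bookkeeping with the lattice decomposition, the convergence estimates, and Borcherds' singularity analysis. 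One can also bypass part of this: both $G_f$ and the right-hand side of \eqref{eq:Glift} are $\Delta_{z_i,0}$-eigenfunctions of eigenvalue $k(1-k)$ with the same logarithmic singularities along the $\mathcal{T}_m$ and the same growth at the cusps, and for $k\ge2$ such a function is unique, so the two agree; the case $k=1$ is then Borcherds' theorem together with \cite[Prop.\ 5.1]{GZ85}.
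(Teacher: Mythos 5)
The paper itself does not prove this proposition: it is quoted as Prop.\ 4.2 of \cite{Li18}, with the $k=1$ case attributed to Borcherds \cite{Borcherds98} and the extension to all odd/higher $k$ to \cite{Bruinier02, Viazovska11}. Your outline follows exactly the route taken in those references --- identify the Grassmannian of $(M_2(\Zb),\det)$ with $\Hb\times\Hb$, match norm-$m$ vectors modulo $\Gamma$ with $\Gamma\backslash\Gamma_m$ and the special divisor with $\mathcal{T}_m$, evaluate the regularized lift, and read off the singularities from Borcherds' local analysis --- so in spirit it is the same argument the paper points to rather than a new one.

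Two cautions. First, the step you yourself flag as the main obstacle is where the sketch, as literally written, does not go through: one cannot unfold ``term by term against the $q$-expansion of $f$'' over $\Fc$, since the $u$-integration there is not over a full period and a single term $q^{-m}$ is not presented as a $\Gamma_\infty$-average. The standard repairs are either (i) to replace $q^{-m}$ by the Maass--Poincar\'e series $F_{m,s}$ of weight $2-2k$, unfold over $\Gamma_\infty\backslash\Gamma$, evaluate the resulting $v$-integral as a Legendre function of the second kind to obtain $G^m_{k,s}$ up to the stated constant, and then specialize $s=k$; or (ii) to use precisely the uniqueness argument you give at the end: both sides are $\Delta_{z_i,0}$-eigenfunctions with eigenvalue $k(1-k)<0$ for $k\ge 2$, have logarithmic singularities along $\mathcal{T}_m$ with coefficient proportional to $c_f(-m)$, and decay at the cusps, so their difference is an $L^2$ eigenfunction with negative eigenvalue and vanishes --- but this still requires verifying the eigen-equation and cusp behaviour of the regularized integral, which is real work of the same kind. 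As it stands, your second paragraph states the answer rather than derives it. Second, a small slip: $(M_2(\Zb),\det)$ is an \emph{even} unimodular lattice (isomorphic to $U\oplus U$), not odd; this matters only in that Borcherds' machinery is formulated for even lattices, so you are in the good case and no auxiliary sublattice or discriminant-form detour is needed.
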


To evaluate the theta integral above, one can try to find a preimage of $R^{k-1}_{\tau, 2-2k}\Theta_L$ under the lowering operator $L_\tau$. This is possible for $k = 1$ when one averages over suitable toric orbit of CM points $(\zm_1, \zm_2)$, as we will see in the next section. For odd $k \ge 2$, one can apply the following operator $\Cc_{k-1}$ due to Cohen to obtain the desired preimage. 

\begin{prop}
  \label{prop:Cohen}
For a real-analytic function $F(\tau)$ on $\Hb^2$, suppose there exists a real-analytic function $\tilde{F}(\tau_1, \tau_2)$ on $\Hb^2$ such that it is harmonic in $\tau_1, \tau_2$, and satisfies
$$
L_\tau \tilde{F}(\tau, \tau) = F(\tau).
$$
Then for $k \ge 1$ odd, we have $L_\tau(\Cc_{k-1} \tilde{F}) = (4\pi)^{1-k} R^{k-1}_{\tau, 0}(F)$, where $\Cc_{k-1}$ is the Cohen operator defined by
$$
(\Cc_{k-1} \tilde{F})(\tau) := (-2\pi i)^{1 - k}\sum_{\ell = 0}^{k-1} (-1)^\ell \binom{k-1}{\ell}^2 \partial_{\tau_1}^{\ell} \partial_{\tau_2}^{k-1-\ell} \tilde{F}(\tau_1, \tau_2) \mid_{\tau_1 = \tau_2 = \tau}.
$$
and satisfies
$$
(\Cc_{k-1} \tilde{F})\mid_{2k} \gamma = 
\Cc_{k-1} (\tilde{F} \mid_{(1, 1)} (\gamma, \gamma))
$$
for all $\gamma \in \mathrm{SL}_2(\Rb)$.
\end{prop}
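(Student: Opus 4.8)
The statement bundles two essentially independent claims: the intertwining relation $L_\tau(\Cc_{k-1}\tilde F)=(4\pi)^{1-k}R^{k-1}_{\tau,0}(F)$ and the covariance $(\Cc_{k-1}\tilde F)\mid_{2k}\gamma=\Cc_{k-1}(\tilde F\mid_{(1,1)}(\gamma,\gamma))$, and the plan is to treat them separately. The covariance needs neither harmonicity nor the parity of $k$: up to the scalar $(-2\pi i)^{1-k}$ the operator $\Cc_{k-1}$ is the $(k-1)$-st Rankin--Cohen bracket in the two weight-one arguments of $\tilde F$ followed by restriction to the diagonal, and the asserted identity is exactly the classical covariance of that bracket. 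I would prove it by expanding $\partial_{\tau_i}^{\ell}\bigl(\tilde F\mid_{(1,1)}(\gamma,\gamma)\bigr)$ via the Leibniz rule for $\partial^{\ell}(h\mid_1\gamma)$, substituting into the definition of $\Cc_{k-1}$, and checking that every contribution carrying a derivative of the automorphy factor cancels by a Vandermonde-type identity, so that what survives transforms with weight $1+1+2(k-1)=2k$; equivalently one packages $\sum_{n\ge 0}(\Cc_n\tilde F)X^n$ into a generating series whose two-variable automorphy factor trivializes on the diagonal. This is Cohen's computation, which I would cite rather than reproduce.

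For the intertwining relation I read ``$\tilde F$ harmonic in $\tau_1,\tau_2$'' as $\Delta_{\tau_i,1}\tilde F=0$ for $i=1,2$, which by \eqref{eq:diffops} is the pointwise relation $\partial_{\tau_i}\partial_{\bar\tau_i}\tilde F=\tfrac{i}{2y_i}\,\partial_{\bar\tau_i}\tilde F$; since $\Cc_{k-1}$ depends only on the part of $\tilde F$ symmetric in $\tau_1,\tau_2$ (for $k$ odd), one may moreover take $\tilde F$ symmetric. Apply $L_\tau=-2iy^2\partial_{\bar\tau}$ to $\Cc_{k-1}\tilde F$: the operator $\partial_{\bar\tau}$ acts on a diagonal restriction as $\partial_{\bar\tau_1}+\partial_{\bar\tau_2}$, and each mixed term $\partial_{\bar\tau_i}\partial_{\tau_i}^{\ell}\tilde F$ that appears is then lowered to order zero in $\partial_{\bar\tau_i}$ by iterating the harmonicity relation, at the cost of powers of $1/y_i$. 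After restricting to the diagonal, this rewrites $L_\tau\Cc_{k-1}\tilde F$ as an explicit finite combination $\sum_{i=0}^{k-1}a_i\,y^{\,i+3-k}\,\partial_\tau^{\,i}w$, with $w:=\partial_{\bar\tau}\bigl(\tilde F(\tau,\tau)\bigr)$ and the $a_i$ universal rational constants built from $k$ and the weights $\binom{k-1}{\ell}^2$ --- here the oddness of $k$ enters, so that the two halves $\ell$ and $k-1-\ell$ of the Cohen sum reinforce one another rather than cancel. On the other side, $R^{k-1}_{\tau,0}$ is the explicit operator $\sum_j c_j\,y^{\,j-(k-1)}\partial_\tau^{\,j}$ furnished by the standard closed form for iterated Maass raising operators, so, after carrying the $y^2$ through $F=-2iy^2w$ and collecting, $(4\pi)^{1-k}R^{k-1}_{\tau,0}F$ is again of the shape $\sum_{i=0}^{k-1}b_i\,y^{\,i+3-k}\,\partial_\tau^{\,i}w$. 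It then remains to verify $a_i=b_i$ for every $i$.

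That coefficient comparison is the main obstacle. The top coefficient ($i=k-1$) matches almost on inspection and pins down the compatibility of the normalizations $(-2\pi i)^{1-k}$ and $(4\pi)^{1-k}$ --- it is here that one uses $(-1)^{k-1}=1$ --- while for general $i$ the identity $a_i=b_i$ collapses, after cancellation, to a single Saalsch\"utz/Vandermonde-type binomial identity, essentially the same one behind the covariance. I would first try to settle it by a direct hypergeometric evaluation and, if the bookkeeping proves unwieldy, fall back on the more robust route of recasting everything via generating functions: attach to $F$ a formal two-variable Cohen--Kuznetsov series in the role of $\tilde F$, turning the claimed identity into an equality of two single-variable generating series that can be checked coefficient by coefficient. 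The case $k=1$ is the hypothesis itself; one might instead attempt an induction on odd $k$ through a recursion expressing $\Cc_{k+1}$ in terms of $\Cc_{k-1}$ together with the action of $L_\tau$ on the image of $R^{k-1}_{\tau,0}$, but I do not expect the induction step to be any lighter than the direct comparison.
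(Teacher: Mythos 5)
Your plan is sound in outline and genuinely different from the paper's. The paper handles both assertions with a single observation (recorded in the remark right after the proposition): because of the weights $(-1)^\ell\binom{k-1}{\ell}^2$, the definition of $\Cc_{k-1}$ is unchanged if every $\partial_{\tau_i}^{r}$ is replaced by $(2i)^{-r}R_{\tau_i,1}^{r}$. Once the Cohen operator is written in terms of Maass raising operators, the covariance $(\Cc_{k-1}\tilde F)\mid_{2k}\gamma=\Cc_{k-1}(\tilde F\mid_{(1,1)}(\gamma,\gamma))$ is immediate from the slash-equivariance of $R_{\tau_i,1}$ (and this replacement also accounts for the constants, since $(4\pi)^{1-k}=(-2\pi i)^{1-k}(2i)^{1-k}$), while the identity $L_\tau(\Cc_{k-1}\tilde F)=(4\pi)^{1-k}R^{k-1}_{\tau,0}F$ drops out of the commutation relations in \eqref{eq:diffops} combined with harmonicity and $L_\tau\tilde F(\tau,\tau)=F(\tau)$. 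Your route is the hands-on version: your reading of ``harmonic'' as weight-one harmonicity, $\partial_{\tau_i}\partial_{\bar\tau_i}\tilde F=\tfrac{i}{2y_i}\partial_{\bar\tau_i}\tilde F$, is the intended one, your symmetrization step is legitimate (for $k$ odd the middle term and the paired terms kill the antisymmetric part), and citing Cohen for the covariance is fine.

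The weak point is that the heart of the computation is deferred rather than done. After pushing $\partial_{\bar\tau}$ through the holomorphic derivatives you a priori land on the two families of generators $(\partial_{\tau_2}^{c}\partial_{\bar\tau_1}\tilde F)(\tau,\tau)$ and $(\partial_{\tau_1}^{c}\partial_{\bar\tau_2}\tilde F)(\tau,\tau)$ with $y$-power coefficients; the assertion that these regroup into $\sum_i a_i\,y^{i+3-k}\partial_\tau^{i}w$ with $w=\partial_{\bar\tau}(\tilde F(\tau,\tau))$ does hold (with $\tilde F$ symmetric the two families combine, and the change of basis to the functions $\partial_\tau^{i}w$ is triangular), but it is bookkeeping that has to be exhibited, not assumed, and the resulting coefficient identity $a_i=b_i$ --- which you propose to settle ``by a direct hypergeometric evaluation'' or by Cohen--Kuznetsov generating series --- is exactly the content of the proposition. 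As written, the proposal is a credible strategy with the decisive binomial identity left unverified. Note also that the cancellation you would need there is the same Vandermonde-type cancellation that shows $\partial_{\tau_i}^{r}$ may be replaced by $(2i)^{-r}R_{\tau_i,1}^{r}$ inside $\Cc_{k-1}$; proving that single replacement statement once and then arguing formally with \eqref{eq:diffops}, as the paper does, is both shorter and less error-prone than the two-sided coefficient comparison you outline.
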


\begin{rmk}
  The definition above is unchanged if one replaces $\partial_{\tau_i}^r$ with $(2 i)^{-r} R_{\tau_i, 1}^{r}$. The proposition can then be checked easily using \eqref{eq:diffops}.
\end{rmk}

With some technical conditions on the discriminants, the averaged value of $G_f(z_1, z_2)$ at CM points was studied in \cite{GKZ87}, extending results in \cite{GZ85} and \cite{GZ86}. As in the case $k = 1$, these averaged values are logarithms of rational numbers, which can be factored explicitly. 
In view of \cite{BKY12}, these results can be put in the framework of arithmetic intersection on Hilbert modular surface, and the technical conditions can be removed. 
We will recall these results in the next section.

\section{Theorems of Gross-Zagier, Gross-Kohnen-Zagier and Their generalizations}
Let $\zm \in Y(1)$ be a CM point of discriminant $d < 0$, and $\Oc_d, K, H_d$ as in the introduction.
For an element $t$ in the finite ideles $\Ab_{K, f}^\times$ of $K$, let $\sigma_t \in \Gal(H_d/K)$ be the associated element via the Artin map. Then $\sigma_t$ acts naturally on $\zm$.

Given two CM points $\zm_i \in Y(1)$ with discriminant $d_i$, one can realize $(\zm_1, \zm_2) \in Y(1)^2$ as small/big CM points (depending on whether $D:=d_1d_2$ is a perfect square or not) in the sense of \cite{Schofer} and \cite{BKY12}. 
The averaged values of $G_f$ at these CM points can be expressed in terms of Fourier coefficients of incoherent Eisenstein series. 
In this section, we will recall these results. 

\subsection{Incoherent Eisenstein Series}
First, we quickly recall the incoherent Eisenstein series in the sense of Kudla \cite{Kudla97} (see e.g.\ section 4 of \cite{BKY12}).
Let $F$ be a totally real field of degree $n_0$, $E/F$ a quadratic, CM extension, and $W = E$ an $F$-quadratic space with quadratic form $Q_F(x) = \alpha x \overline{x}$ for some $\alpha \in F^\times$. Denote $\{\sigma_j: 1 \le j \le n_0\}$ the real embeddings of $F$ and $(V, Q)$ the restriction of scalar of $(W, Q_F)$ to $\Qb$. 
If $\alpha$ is chosen such that $\sigma_{n_0}(\alpha) < 0$ and $\sigma_j(\alpha) > 0$ for $1 \le j \le n_0-1$, then $(V, Q)$ has signature $(n, 2)$. 

Fix an additive adelic character $\psi$ of $\Qb$ and denote $\psi_F = \psi \circ \Tr_{F/\Qb}$. 
Associate to it is a Weil representation $\omega = \omega_{\psi_F}$ of $\SL_2(\Ab_F)$ on $S(W(\Ab_F)) = S(V(\Ab_\Qb))$. 
Let $\chi_{}$ be the quadratic Hecke character associated to $E/F$. 
For any element $\Phi$ in the principal series representation $I(s, \chi)$, one can define a Hilbert Eisenstein series
$$
E(g, s, \Phi) :=  \sum_{\gamma \in B_F \backslash \SL_2(F)} \Phi(\gamma g, s),~ g \in \SL_2(\Ab_F)
$$
with $\Re(s) \gg 0$, and analytically continue it to $s \in \Cb$. 
At the infinite places, we choose $\Phi$ to be the unique eigenvector of $\SL_2(\Rb)$ of weight 1. 

At the finite place, one can use any $\phi \in S(V(\Ab_{\Qb, f}))$ to construct a section. Using these information, we can define a Hilbert Eisenstein series $E(\vec{\tau}, s, \phi)$, which is a real-analytic Hilbert modular form of parallel weight 1 in $\vec{\tau} = (\tau_1, \dots, \tau_{n_0}) \in \Hb^{n_0}$ (see equation (4.4) in \cite{BKY12}). We can further normalize it by
$$
E^*(\vec{\tau}, s, \phi) = \Lambda(s + 1, \chi) E(\vec{\tau}, s, \phi),
$$
with $\Lambda(s, \chi)$ the completed $L$-function associated to $\chi$. 
Usually, we will take $\phi = \phi_\mu$ the characteristic function of $(L + \mu) \otimes \hat{\Zb}$ for $\mu \in L^\vee/L$ and $L \subset V$ some even integral lattice. In that case, we use $E^*(\vec{\tau}, s, L)$
to denote the vector-valued modular form $\sum_{\mu \in L^\vee/L} E^*(\vec{\tau}, s, \phi_\mu) \ef_\mu $.

Because of the choice of the section at the infinite places, this Eisenstein series is incoherent in the sense of Kudla \cite{Kudla97}, and vanishes at $s = 0$. 
Its derivative at $s = 0$ is a real-analytic Hilbert modular form of parallel weight $1$. 
For a totally positive $t \in F^\times$, denote its $t$\tth Fourier coefficient by $a(t, \phi)$. 
By Prop.\ 4.6 in \cite{BKY12}, one can write 
\begin{equation}
  \label{eq:am}
a_m(\phi) := \sum_{t \in S_m} a(t, \phi) = \sum_{p} a_{m, p}(\phi) \log p,  
\end{equation}
with $S_m :=  \{t \in F^\times: t \gg 0, \Tr(t) = m\}$ and $a_{m, p}(\phi)$ in the subfield of $\Cb$ generated by the values $\phi(x)$ for $x \in V(\Ab_{\Qb, f})$. 
When $\phi = \otimes \phi_{\pf}$ is factorizable, this is given by product of values of local Whittaker functions, which have been explicitly calculated in many cases (see e.g.\ \cite{Yang05}, \cite{KY10}, \cite{YYY18}). 
Using these explicit formula, we can say something more refined about $a(t, \phi)$ when $\phi = \phi_\mu$.

\begin{prop}
  \label{prop:nonneg}
For any even, integral lattice $L \subset V$ and $\mu \in L^\vee/L$, we have
\begin{equation}
  \label{eq:nonneg}
  - a(t, \phi_\mu) \ge 0
\end{equation}
whenever $t \in F^\times$ is totally positive. Furthermore for any $m > 0$, the coefficient $a(t, \phi_\mu) = 0$ for all but finitely many $t \in S_m$.
\end{prop}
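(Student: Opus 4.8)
The plan is to reduce the statement to a purely local computation of Whittaker integrals, using the factorization $\phi_\mu = \otimes_p \phi_{\mu,p}$ and the fact that the $t$\textsuperscript{th} Fourier coefficient of the derivative at $s=0$ of an incoherent Eisenstein series factors as a product of local Whittaker functions. More precisely, since the global Eisenstein series $E^*(\vec\tau, s, \phi_\mu)$ vanishes at $s = 0$ (incoherence), the $t$\textsuperscript{th} coefficient $a(t,\phi_\mu)$ is, up to an explicit positive archimedean constant, equal to $-W'_{t,\infty}(0) \prod_{p < \infty} W_{t,p}(0)$ for those primes where the local Whittaker function does not already vanish at $s=0$; at the (finitely many) ``bad'' primes one instead sees $W_{t,p}(0) = 0$ and the derivative $W'_{t,p}(0)$ enters. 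The sign of $-a(t,\phi_\mu)$ is therefore controlled by: (i) the sign of the archimedean factor, (ii) the signs of the finitely many nonzero finite local factors $W_{t,p}(0)$, and (iii) the sign of the derivative factor $W'_{t,p}(0)$ at the bad prime(s). This is the standard ``Kudla program'' bookkeeping, so the first step is just to write down this factorization carefully for parallel weight $1$ over the totally real field $F$, citing \cite{BKY12} and \cite{Schofer}.

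Next I would invoke the explicit local formulas. For the archimedean place, the weight-one Whittaker function for a totally positive $t$ is classical and its value (or derivative, at the bad place) has a definite sign — this is essentially the computation behind the incomplete Gamma function / confluent hypergeometric expression appearing in Gross--Zagier, and one checks that the archimedean contribution carries exactly the sign needed so that $-a(t,\phi_\mu) \ge 0$. For the non-archimedean places, the key input is \cite{YYY18}, which computes local Whittaker functions (and their derivatives) for general lattices at all primes, including the ramified and dyadic cases; the point to extract is that $W_{t,p}(0) \ge 0$ whenever it is nonzero, and that the derivative $W'_{t,p}(0)$ at a bad prime has the sign that makes the product work out. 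This is precisely the place where the hypotheses ``coprime and fundamental'' were needed in \cite{GZ85}, and the content of the proposition is that the more general results of \cite{YYY18} remove them: one simply reads off non-negativity from those explicit formulas rather than from the closed form available in the coprime fundamental case.

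For the finiteness assertion — that $a(t,\phi_\mu) = 0$ for all but finitely many $t \in S_m$ — the argument is cleaner: $E^*(\vec\tau, s, \phi_\mu)$, and hence its derivative at $s=0$, is a (real-analytic) Hilbert modular form of parallel weight $1$ whose Fourier expansion is supported on a lattice of totally positive elements; since $S_m = \{t \in F^\times : t \gg 0,\ \Tr_{F/\Qb}(t) = m\}$ is a finite set to begin with (the totally positive elements of bounded trace in the inverse different, or the relevant fractional ideal, form a finite set — this is a standard compactness/geometry-of-numbers fact for totally real fields), finiteness is automatic. I would state it this way so that no real work is needed beyond noting $|S_m| < \infty$.

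The main obstacle is step two at the bad (ramified, and especially dyadic) primes: verifying that the sign of the local Whittaker derivative $W'_{t,p}(0)$ there is compatible with non-negativity of $-a(t,\phi_\mu)$. In the coprime fundamental setting this never arises because the relevant local densities are given by the clean formulas in \cite{GZ85}; in general one must trace through the case analysis in \cite{YYY18}, keeping careful track of normalizations (the completed $L$-factor $\Lambda(s+1,\chi)$ in $E^*$, the power of $2$ and of $\pi$ in the archimedean Whittaker function, and the sign conventions for $\chi$ and $\psi$). Once the normalizations are pinned down so that the archimedean sign and the product of finite signs are unambiguous, the non-negativity is a finite check, but getting those normalizations exactly right is the delicate part.
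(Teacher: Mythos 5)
Your overall strategy is the same as the paper's: express $a(t,\phi_\mu)$ as a product of local Whittaker functions via the incoherence/Diff-set bookkeeping, and read off non-negativity of each local factor from the explicit computations in \cite{YYY18}. The paper packages the factorization and the normalization worries you flag by citing Prop.~2.7 of \cite{YY19}, which gives $a(t,\phi) = -2^{n_0}\frac{{W^{*,}}'_{t,\pf_0}(0,\phi_{\pf_0})}{\gamma(W_{\pf_0})}\prod_{\pf\nmid \pf_0\infty}\frac{W^*_{t,\pf}(0,\phi_\pf)}{\gamma(W_\pf)}$ when $\mathrm{Diff}(W,t)$ is a single finite prime and $0$ otherwise (in particular, for $t\gg 0$ the archimedean places are not in the Diff set, so no archimedean derivative enters, contrary to the way you phrased the generic term). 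However, there is a step you skip that is the actual first half of the paper's proof: for a general even integral lattice $L\subset V$, the function $\phi_\mu$ is not of the form treated in \cite{YYY18}, whose appendix computes Whittaker functions for characteristic functions of cosets of $\Oc_E$ (after scaling). The paper first chooses $c$ with $c\Oc_E\subset L\subset L^\vee\subset (c\Oc_E)^\vee$ and decomposes $\phi_\mu=\sum_{\mu'}\phi'_{\mu'}$ over cosets of $c\Oc_E$, reducing to $L=\Oc_E$ by linearity; without this reduction you cannot directly invoke the local formulas you rely on.

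The finiteness argument as you state it is wrong. The set $S_m=\{t\in F^\times: t\gg 0,\ \Tr(t)=m\}$ is infinite whenever $F\neq\Qb$ (e.g.\ in the big CM case $F$ real quadratic: $t=\tfrac m2+b\sqrt D$ with $b\in\Qb$ ranging over an interval), because $t$ runs over the field, not over a lattice, so ``no real work is needed beyond noting $|S_m|<\infty$'' does not hold. The correct argument --- which your clause about the Fourier expansion being ``supported on a lattice'' gestures at but which you then discard --- is the one the paper gives: there is a positive integer $b$, depending on $L, E, F$, such that $a(t,\phi_\mu)=0$ unless $t\in b^{-1}\Oc_F$, and totally positive elements of a fixed fractional ideal with bounded trace form a finite set. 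That support statement is the real content of the second claim and needs to be established (it follows from the supports of the local Whittaker functions after the reduction to $\Oc_E$), not assumed.
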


\begin{proof}
  Since any two lattices in $V$ of full rank are commensurable, we can find a positive integer $c$ such that $c\Oc_E$ is a sublattice of $L$ and $c\Oc_E \subset L \subset L^\vee \subset (c\Oc_E)^\vee$.
Let $\varpi: L^\vee/c\Oc_E \to L^\vee/L$ be the natural projection.
We can then write 
$$
\phi_\mu = \sum_{\mu' \in L^\vee/c\Oc_E,~ \varpi(\mu') = \mu} \phi'_{\mu'}
$$ 
with $\phi'_{\mu'}$ the characteristic function of $c\Oc_E + \mu'$. 
Therefore, it suffices to prove the claim for $L = c\Oc_E$. 
After scaling, we can then suppose $L = \Oc_E$ and $0 \neq \alpha \in \Oc_F$, as $L$ is even integral.
Then the section $\phi = \phi_\mu = \otimes \phi_{\pf} $ is factorizable and the coefficient $a(t, \phi)$ is given by
$$
a(t, \phi) = -2^{n_0} \frac{W^{*,\prime}_{t, \pf_0}(0, \phi_{\pf_0})}{\gamma(W_{\pf_0})} \prod_{\pf \nmid \pf_0 \infty} \frac{W^{*}_{t, \pf}(0, \phi_\pf)}{\gamma(W_{\pf})}
$$
if the 'Diff' set of Kudla $\mathrm{Diff}(W, t)$ consists of just a finite prime $\pf_0$, and vanishes otherwise (see Prop.\ 2.7 in \cite{YY19}). Here $W^*_{t, \pf}(s, \phi)$ is the normalized local Whittaker function (see equation (2.25) in \cite{YY19}) and $\gamma(W_\pf)$ is the local Weil index. 
These local Whittaker functions have been computed explicitly in all cases in the appendix of \cite{YYY18}, from which we know that 
$$
\frac{W^{*}_{t, \pf}(0, \phi_\pf)}{\gamma(W_{\pf})} \ge 0,~ 
\frac{W^{*,\prime}_{t, \pf}(0, \phi_{\pf})}{\gamma(W_{\pf})} \ge 0.
$$
for all totally positive $t \in F^\times$. 
Furthermore, there is a positive integer $b$ depending on $L, E, F$ such that $a(t, \phi) = 0$ for all $t \not\in b^{-1} \Oc_F$. 
This proves the second claim.
\end{proof}
\subsection{Big CM Points}
When $D$ is not a perfect square, $F = \Qb(\sqrt{D})$ is a real quadratic field. 
For $i = 1, 2$, let $z_i  \in \Hb \cap K_1K_2$ be a representative of $\zm_i$, and $\af_i := \Zb  + \Zb z_i \subset K_i$ the corresponding $\Zb$-module. 
Denote $W = K_1K_2$ the $F$-quadratic space with the quadratic form $Q_F(x) = \frac{x\bar{x}}{ \sqrt{D}}$. 
We can identify $(V, Q)$ with the rational quadratic space $(M_2(\Qb), \det)$ via the map
\begin{align*}
   \sum_{i = 1}^4 &x_i e_i \in V \mapsto \pmat{x_3}{x_1}{x_4}{x_2} \in M_2(\Qb),\\
e_1 &:= 1,~
e_2 := -\overline{z_1},~
e_3 := z_2,~
e_4 := -\overline{z_1}z_2,
\end{align*}
under which $\overline{\af_1}\af_2$ is mapped to the unimodular lattice $L = M_2(\Zb)$. 
Define a torus
$$
T(R) := \{(t_1, t_2) \in (K_1 \otimes_\Qb R)^\times \times (K_2 \otimes_\Qb R)^\times: t_1\overline{t_1} = t_2 \overline{t_2} \}
$$
for any $\Qb$-algebra $R$.
It embeds into the algebraic group 
\begin{equation}
  \label{eq:H}
\mathrm{GSpin_V}(R) \cong \mathrm{H}(R) :=  \{(g_1, g_2) \in \mathrm{GL}_2(R) \times  \mathrm{GL}_2(R): \det(g_1) = \det(g_2)\}  
\end{equation}
via the map $\iota = (\iota_1, \iota_2): T \to \mathrm{H}$, where
$$
(e_1, e_2)  \iota_1(r)  = ( r e_{1}, r e_2), \quad  \iota_2(r) (e_{3}, e_1)^t = ( \bar{r} e_{3}, \bar{r} e_1)^t.
$$
Denote $K_T := \iota^{-1}(\mathrm{H}(\hat{\Zb})) \subset T(\Ab_f)$ a compact subgroup preserving the torus, and $K_{T, i} := \iota_i^{-1}(\mathrm{H}(\hat{\Zb})) \subset \Ab_{K_i, f}^\times$ for $i = 1, 2$. Then $K_{T, i} = \hat{\Oc}_{d_i}$ and $K_i^\times \backslash \Ab_{K_i, f}^\times / K_{T, i}$ is just the class group of the order $\Oc_{d_i}$, which we denote by $\Cl(d_i)$.
As in Lemma 3.5 in \cite{YY19}, there is a natural injection 
$$
p': T(\Qb) \backslash T(\Ab_f) /K_T \to \Cl(d_1) \times \Cl(d_2)
$$ 
sending $[(t_1, t_2)]$ to $([t_1], [t_2])$. 
On the other hand,  $H = H_1H_2$ implies that the natural map 
$$
p'':\Gal(H/\Qb) \to \Gal(H_1/\Qb) \times \Gal(H_2/\Qb)
$$
sending $\sigma$ to $(\sigma\mid_{H_1}, \sigma\mid_{H_2})$ is injective.
Since $H_j/K_j$ is a ring class field, the Galois group $\Gal(H_j/\Qb)$ is a generalized dihedral group. 
This gives us the following result.
\begin{lemma}[Theorem 8.3.12 in \cite{Cohn85}]
  \label{lemma:abelian}
  Let $H_0 := H_1 \cap H_2 \subset H$ be the intersection of ring class fields. Then $H_0/\Qb$ is abelian.
\end{lemma}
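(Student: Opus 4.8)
The plan is entirely group-theoretic. Since $H_1$ and $H_2$ are both Galois over $\Qb$, so is $H_0 = H_1 \cap H_2$; write $Q := \Gal(H_0/\Qb)$. Because $D = d_1 d_2$ is not a perfect square, $d_1/d_2$ is not a square in $\Qb^\times$, so $K_1 \neq K_2$, hence $K_1 \cap K_2 = \Qb$. Recall, as already noted in the text, that for $i = 1, 2$ the group $\Gal(H_i/\Qb)$ is generalized dihedral: it equals $A_i \rtimes \langle c_i \rangle$, where $A_i := \Gal(H_i/K_i) \cong \Cl(d_i)$ is abelian, $c_i$ is an involution (a complex conjugation) which does not lie in $A_i$ since $K_i$ is imaginary, and $c_i$ acts on $A_i$ by $a \mapsto a^{-1}$. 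Fix an embedding into $\Cb$ once and for all, so that $c_1$ and $c_2$ are restrictions of a single complex conjugation.

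First I dispose of the case $K_1 \not\subseteq H_0$ (and, symmetrically, $K_2 \not\subseteq H_0$). Here $H_0$ corresponds to a normal subgroup $N$ of $\Gal(H_1/\Qb)$ with $N \not\subseteq A_1$, so $N$ contains some element $a c_1$. A short computation in the generalized dihedral group shows that normality of $N$ forces $A_1^2 \subseteq N$ and that the class of $c_1$ modulo $N$ lies in the image of $A_1$; hence $Q = \Gal(H_1/\Qb)/N$ equals the image of the abelian group $A_1$, so $Q$ is abelian (indeed an elementary abelian $2$-group).

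It remains to treat the case $K_1 \subseteq H_0$ and $K_2 \subseteq H_0$, so that the compositum $K_1 K_2$ (a biquadratic field) is contained in $H_0$. Set $P_i := \Gal(H_0/K_i)$; each $P_i$ is a quotient of the abelian group $A_i$, hence abelian, and has index $2$ in $Q$. Let $c \in Q$ be complex conjugation; it is the common restriction to $H_0$ of $c_1$ and $c_2$, it lies in neither $P_1$ nor $P_2$ (as $K_1, K_2$ are imaginary), and, because the inversion action of $c_i$ on $A_i$ descends to $P_i$, the element $c$ acts on both $P_1$ and $P_2$ by inversion. Since $Q = \langle P_1, P_2 \rangle$ (because $K_1 \cap K_2 = \Qb$) and $N := P_1 \cap P_2$ is contained in the two abelian groups $P_1, P_2$, the subgroup $N$ is central in $Q$. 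I then claim that $x^2 = 1$ for every $x \in P_1$. If $x \in N$, then $x$ is central, so $x = c x c^{-1} = x^{-1}$. If $x \in P_1 \setminus N$, then $x \notin P_2$, so $x$ restricts nontrivially to $K_2$ and therefore agrees with $c$ on $K_2$; hence $xc \in \Gal(H_0/K_2) = P_2$. Applying the inversion action of $c$ to $xc \in P_2$ gives $c(xc)c^{-1} = (xc)^{-1} = c x^{-1}$, while $c(xc)c^{-1} = cx$ by a bare computation using $c^2 = 1$; comparing, $x = x^{-1}$. Thus $P_1$ is an elementary abelian $2$-group, on which $c$ (acting by inversion) acts trivially, so $Q = \langle P_1, c \rangle$ is abelian, as desired.

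The crux is this last case: membership in a single dihedral quotient does not force $H_0$ to be abelian, since dihedral groups have non-abelian dihedral quotients, so one genuinely needs to use the dihedral structures of $\Gal(H_1/\Qb)$ and $\Gal(H_2/\Qb)$ simultaneously, and the auxiliary element $xc \in P_2$ is exactly the device that carries the $H_2$-side constraint over to $P_1$. The remaining bookkeeping — that the complex conjugations on $H_1$ and $H_2$ restrict to a common element of $Q$, and that inversion on $\Cl(d_i)$ descends to $P_i$ — is routine.
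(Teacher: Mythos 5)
Your proof is correct, and it follows the same basic philosophy as the paper's argument (pure group theory driven by the generalized dihedral structure of $\Gal(H_i/\Qb)$ and the fact that complex conjugation inverts $\Gal(H_i/K_i)$), but the bookkeeping is organized differently and is in places more careful. The paper disposes of the possibility $K_1K_2 \not\subseteq H_0$ by replacing $H_0$ with the compositum $H_0K_1K_2$, whereas you treat $K_i \not\subseteq H_0$ directly by analyzing the quotient of the generalized dihedral group by a normal subgroup not contained in $A_i$; both work, and your computation showing $A_1^2 \subseteq N$ is the standard one. In the main case the paper picks two order-two elements $\sigma_1, \sigma_2$ inverting $\Gal(H_0/K_1)$ and $\Gal(H_0/K_2)$ and asserts rather tersely that each $\sigma_j$ commutes with $\Gal(H_0/K_1K_2)$ --- a step which implicitly amounts to the $2$-torsion statement you prove; your route makes this explicit by showing $P_1 \cap P_2$ is central (hence killed by the inversion action of $c$) and then using the element $xc \in P_2$ to force $x^2 = 1$ for $x \in P_1 \setminus P_2$, so that $Q = \langle P_1, c\rangle$ is visibly abelian (indeed elementary $2$-abelian, consistent with $H_0$ lying in the genus fields). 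You also isolate exactly where the hypothesis $K_1 \neq K_2$ (i.e.\ $d_1d_2$ not a square, as in the big CM setting where the lemma is used) enters, namely via $K_1 \cap K_2 = \Qb$ giving $Q = \langle P_1, P_2\rangle$; the paper uses this only implicitly. So: correct, same circle of ideas, with your version arguably filling in the one step the paper leaves compressed.
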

\begin{proof}
  We collect the proof from \cite{Cohn85} here. 
By replacing $H_0$ with $H_0K_1K_2$, we can suppose that $H_0$ contains $K_1K_2$. Clearly, $H_0 \subset H_j$ is abelian over $K_j$ for $j = 1, 2$. Therefore, it is also abelian over $K_1K_2$. Since $\Gal(H_0/\Qb)$ is a quotient of $\Gal(H_j/\Qb)$, which is generalized dihedral, we can find $\sigma_j \in \Gal(H_0/\Qb)$ of order 2 such that $\sigma_j h \sigma_j^{-1} = h^{-1}$ for all $h \in \Gal(H_0/K_j)$. 
That means $\Gal(H_0/\Qb)$ is generated by the abelian group $\Gal(H_0/K_1K_2)$ and the elements $\sigma_1, \sigma_2$. Since $\Gal(H_0/K_j)$ is abelian, we know that $\sigma_j$ commutes with $\Gal(H_0/K_1K_2)$. But $\sigma_1, \sigma_2$ also commute since
$$
\sigma_1 \sigma_2 \sigma_1^{-1} = \sigma_2^{-1} = \sigma_2. 
$$
This finishes the proof.
\end{proof}
After identifying $\Cl(d_i)$ with $\Gal(H_i/K_i) \subset \Gal(H_i/\Qb)$ via the Artin map, we can now state the following analogue of Lemma 3.8 of \cite{YY19}.
\begin{prop}
  \label{prop:classgp}
Using the notation introduced above, the image of $p'$ is contained in the image of $p''$, and $p''^{-1} \circ p': T(\Qb) \backslash T(\Ab_f) /K_T \to \Gal(H/K_1K_2)$ is an isomorphism.
\end{prop}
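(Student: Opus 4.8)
The plan is to reduce the assertion to an equality of subgroups of $\Cl(d_1) \times \Cl(d_2)$ and then to analyze the two sides. Write $\psi \colon \Gal(H/K_1K_2) \to \Cl(d_1) \times \Cl(d_2)$ for $\sigma \mapsto (\sigma|_{H_1}, \sigma|_{H_2})$; since $\sigma$ fixes $K_1 K_2 \supseteq K_i$, each $\sigma|_{H_i}$ lies in $\Gal(H_i/K_i) \cong \Cl(d_i)$, so $\psi$ is well defined, and it is injective because $H = H_1 H_2$ (so $p''$ is injective). The map $p'$ is injective by construction. Hence, once $\operatorname{im}(p') \subseteq \operatorname{im}(\psi)$ is established, the composite $\psi^{-1} \circ p'$ is a well-defined injective homomorphism; it lands inside $\Gal(H/K_1K_2)$ and agrees with the map $p''^{-1} \circ p'$ of the statement (note $\operatorname{im}(p') \subseteq \Gal(H_1/K_1)\times\Gal(H_2/K_2)$), and it is an isomorphism if and only if $\operatorname{im}(p') = \operatorname{im}(\psi)$. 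So everything reduces to identifying these two images.

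First I would treat $\operatorname{im}(p') \subseteq \operatorname{im}(\psi)$ by a gluing argument. Since $H = H_1 H_2$, a pair $(\rho_1, \rho_2) \in \Gal(H_1/\Qb) \times \Gal(H_2/\Qb)$ lies in the image of restriction from $\Gal(H/\Qb)$ exactly when $\rho_1$ and $\rho_2$ agree on $H_0 := H_1 \cap H_2$. By Lemma~\ref{lemma:abelian}, $H_0/\Qb$ is abelian, so we may invoke class field theory over $\Qb$: consistency (functoriality under the norm) of the Artin map gives, for every $t \in \Ab_{K_i, f}^\times$ and $i = 1, 2$,
\[
\sigma_t|_{H_0} \;=\; \mathrm{Art}_{H_0/\Qb}\bigl(\Nm_{K_i/\Qb}(t)\bigr),
\]
using $H_0 K_i \subseteq H_i$ and $\sigma_t|_{H_0 K_i} = \mathrm{Art}_{H_0 K_i/K_i}(t)$. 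Thus if $(t_1, t_2) \in T(\Ab_f)$, so $t_1 \overline{t_1} = t_2 \overline{t_2}$, i.e.\ $\Nm_{K_1/\Qb}(t_1) = \Nm_{K_2/\Qb}(t_2)$, then $\sigma_{t_1}|_{H_0} = \sigma_{t_2}|_{H_0}$. The pair $(\sigma_{t_1}, \sigma_{t_2})$ therefore glues to a unique $\sigma \in \Gal(H/\Qb)$, which fixes $K_1$ and $K_2$ since each $\sigma_{t_i}$ does, hence $\sigma \in \Gal(H/K_1K_2)$ with $\psi(\sigma) = ([t_1], [t_2]) = p'([(t_1, t_2)])$.

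The reverse inclusion $\operatorname{im}(\psi) \subseteq \operatorname{im}(p')$ is where I expect the real work to lie. Running the computation backwards: given $\sigma \in \Gal(H/K_1K_2)$, pick idelic lifts $t_i^0 \in \Ab_{K_i, f}^\times$ of $\sigma|_{H_i}$; the displayed formula forces $\Nm_{K_1/\Qb}(t_1^0)$ and $\Nm_{K_2/\Qb}(t_2^0)$ to have the same image under $\mathrm{Art}_{H_0/\Qb}$, so their ratio $\delta$ lies in the norm subgroup of $H_0/\Qb$. One then wants to correct $t_i^0$ by an element of $K_i^\times K_{T,i}$ — which changes neither $[t_i^0] \in \Cl(d_i)$ nor $\sigma_{t_i^0}$, the latter since $H_i$ has norm subgroup $K_i^\times K_{T,i}$ — so that the two norms become equal, i.e.\ so that $\delta \in \Nm_{K_1/\Qb}(K_1^\times K_{T,1}) \cdot \Nm_{K_2/\Qb}(K_2^\times K_{T,2})$. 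The key structural input I would use is that $H_0$ equals the intersection $M_1 \cap M_2$ of the maximal subfields $M_i \subseteq H_i$ abelian over $\Qb$ (the ``genus field'' of the order $\Oc_{d_i}$): $M_i \subseteq H_i$ gives $M_1 \cap M_2 \subseteq H_0$, while $H_0/\Qb$ abelian and $H_0 \subseteq H_i$ gives $H_0 \subseteq M_i$. Since the norm subgroup of $M_i/\Qb$ is $\Qb^\times_{>0} \cdot \Nm_{K_i/\Qb}(K_i^\times K_{T,i})$ by genus theory for $\Oc_{d_i}$, the norm subgroup of $H_0 = M_1 \cap M_2$ is the product of these, and one combines this with the fact that $\delta$ is itself a quotient of norms from $K_1$ and $K_2$; the delicate point, and the actual content of the proposition, is the bookkeeping with the level $K_T$, i.e.\ with the conductors of the non-maximal orders. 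An equivalent route packaging the same input is to observe that $p''^{-1}\circ p'$ is an injective homomorphism of finite groups and to verify the cardinality identity $|T(\Qb) \backslash T(\Ab_f) / K_T| = [H : K_1K_2] = |\Cl(d_1)|\,|\Cl(d_2)|/[H_0 : \Qb]$, the right side via $\Gal(H/\Qb) = \Gal(H_1/\Qb) \times_{\Gal(H_0/\Qb)} \Gal(H_2/\Qb)$ together with $[K_1K_2 : \Qb] = 4$ (forced in the big CM case), and the left side via the standard class number formula for the relative norm torus $T$.
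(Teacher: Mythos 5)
Your first half — the containment $\mathrm{im}(p')\subseteq\mathrm{im}(p'')$ via norm-functoriality of the Artin map ($\sigma_{t_i}|_{H_0}$ is the Artin image of $\Nm_{K_i/\Qb}(t_i)$, so $t_1\overline{t_1}=t_2\overline{t_2}$ forces agreement on $H_0$, and Lemma~\ref{lemma:abelian} makes this a class-field-theoretic statement over $\Qb$) — is correct and is essentially the paper's argument. The genuine gap is the reverse inclusion, which you explicitly leave open. After your (correct) reduction — choose idelic lifts $t_i^0$ of $\sigma|_{H_i}$, use $H_0=M_1\cap M_2$ and the fact that the subgroup of $\Ab_{\Qb,f}^\times$ attached to $M_i$ is $\Qb^\times_{>0}\Nm_{K_i/\Qb}(K_i^\times K_{T,i})$ — the ``level $K_T$'' bookkeeping is actually the easy part: the unit-idele factors separate off cleanly, and what remains is a rational number $q\in\Qb^\times_{>0}$ lying in $\Nm_{K_1/\Qb}(\Ab_{K_1,f}^\times)\,\Nm_{K_2/\Qb}(\Ab_{K_2,f}^\times)$ which must be shown to lie in $\Nm_{K_1/\Qb}(K_1^\times)\,\Nm_{K_2/\Qb}(K_2^\times)$. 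That is exactly the multinorm principle for the pair of distinct quadratic fields $K_1,K_2$ (vanishing of the obstruction group of the torus $T$), and it is not a formality: the group $\Nm_{K_1/\Qb}(K_1^\times)\Nm_{K_2/\Qb}(K_2^\times)$ is in general a proper subgroup of $\Qb^\times_{>0}$ (e.g.\ $11$ is not a product of a norm from $\Qb(i)$ and a norm from $\Qb(\sqrt{-3})$), so the adelic hypothesis on $q$ must genuinely be used. Your proposal supplies no argument at precisely this point, and your fallback via cardinalities is circular as stated: $|T(\Qb)\backslash T(\Ab_f)/K_T|$ is not given by a quotable ``standard class number formula'' — the class number of a multinorm torus involves exactly the local-global invariants whose triviality is at issue (the Galois-side count $[H:K_1K_2]=|\Cl(d_1)||\Cl(d_2)|/[H_0:\Qb]$ is fine).

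For comparison, the paper closes the argument differently and more economically: it describes \emph{both} images as agreement-after-restriction conditions — $p''(\Gal(H/K_1K_2))$ is the set of pairs $(\sigma_1,\sigma_2)$ with $\sigma_1|_{H_0}=\sigma_2|_{H_0}$, while $\mathrm{Im}(p')$ is the set of pairs agreeing on $\Qb^{\mathrm{ab}}\cap H_0$ (since $\Nm_{K_i/\Qb}(t_i)$ corresponds to the restriction of $\sigma_{t_i}$ to $\Qb^{\mathrm{ab}}$) — and then Lemma~\ref{lemma:abelian} identifies the two conditions because $H_0\subset\Qb^{\mathrm{ab}}$. Your forward direction reproduces half of this; to complete your plan you would either have to prove the multinorm statement above (true for two distinct quadratic fields, e.g.\ by adjusting by a suitable global norm from $K_1$ with prescribed local behaviour and then applying Hasse's norm theorem to the cyclic extension $K_2/\Qb$), or else justify in full the description of $\mathrm{Im}(p')$ that the paper uses.
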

\begin{rmk}
  When $(d_1, d_2) = 1$, the map $p''$ is an isomorphism and we recover Lemma 3.8 in \cite{YY19}.
\end{rmk}

\begin{proof}
The image of $p''$ is exactly given by 
$$
p''(\Gal(H/K_1K_2)) = \{(\sigma_1, \sigma_2) \in \Gal(H_1/K_1) \times \Gal(H_2/K_2): \sigma_1 \mid_{H_0} = \sigma_2 \mid_{H_0} \}.
$$
On the other hand, if $t_i \in K_i^\times \backslash \Ab_{K_i, f}^\times / K_{T, i}$ is associated to $\sigma_i \in \Gal(K_i^{\mathrm{ab}}/K_i)$ via the Artin map, then $t_i\overline{t_i} = \Nm_{K_i/\Qb}(t_i)$ is associated  $\mathrm{res}(\sigma_i) \in \Gal({\Qb}^{\mathrm{ab}}/\Qb)$ with $\mathrm{res}: \Gal(K_i^{\mathrm{ab}}/K_i) \to \Gal(\Qb^{\mathrm{ab}}/\Qb)$ the natural restriction map.
So the image of $p'$ is 
$$
\mathrm{Im}(p') = \{(\sigma_1, \sigma_2) \in \Gal(H_1/K_1) \times \Gal(H_2/K_2): \sigma_1 \mid_{\Qb^\mathrm{ab} \cap H_0} = \sigma_2 \mid_{\Qb^\mathrm{ab} \cap H_0} \}.
$$
Since $H_0/\Qb$ is abelian by Lemma \ref{lemma:abelian}, this finishes the proof.
\end{proof}

Now let $Z(W)$ be the big CM point on $Y(1)^2$ associated to $W$ as in \cite{BKY12}. Then the argument in section 3 of \cite{YY19} with the proposition above immediately implies that 
\begin{equation}
  \label{eq:bigCM1}
  \begin{split}
Z(W) = 
\sum_{\sigma \in \Gal(H/K_1K_2)} (\zm_1, \zm_2)^{\sigma} +
(-\overline{\zm_1}, \zm_2)^{\sigma} + 
(\zm_1, -\overline{\zm_2})^{\sigma} +
(-\overline{\zm_1}, -\overline{\zm_2})^{\sigma}.
  \end{split}
\end{equation}
Now, the value at $Z(W)$ of the higher Green's function $G_f$ with $f \in M_{2-2k}^!$ and $k \ge 1$ odd can be explicitly given as a finite sum of Fourier coefficients of certain incoherent Eisenstein series. We state it as follows.
\begin{thm}
\label{thm:GKZ1}
Let $f \in M_{2-2k}^!$ with $k \ge 1$ odd and vanishing constant term if $k = 1$. Suppose $d_1d_2$ is not a perfect square. Then
  \begin{equation}
    \label{eq:GKZ1}
    \begin{split}
      G_f(Z(W)) &= \frac{|Z(W)|}{2\Lambda(0, \chi)} \sum_{m \ge 1} c_f(-m) a_{m}(\phi; k),\\
a_{m}(\phi; k) &:=      m^{k-1}\sum_{t \in S_m} P_{k-1}\lp \frac{t - t'}{m} \rp a_{}(t, \phi),
    \end{split}
  \end{equation}
  where $a_{}(t, \phi)$ is the $t$\tth Fourier coefficient of holomorphic part of the incoherent Eisenstein series ${E^{*,}}'((\tau_1, \tau_2), 0, \phi)$ of parallel weight $(1, 1)$ with $\phi = \phi_{\af_1, \af_2} \in S(V \otimes \Ab_f) = S(W_\Qb \otimes \Ab_f)$ the characteristic function of $\overline{\af_1} \af_2 \otimes \hat{\Zb}$, and $\Lambda(s, \chi)$ the completed $L$-function.
\end{thm}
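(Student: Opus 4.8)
The plan is to deduce \eqref{eq:GKZ1} from the big CM value formula of Bruinier--Kudla--Yang \cite{BKY12} (which for $k=1$ goes back to Schofer \cite{Schofer} and Gross--Kohnen--Zagier \cite{GKZ87}), using Proposition \ref{prop:classgp} to match the Galois orbit $Z(W)$ with the relevant toric orbit and Proposition \ref{prop:Cohen} to promote the weight-$1$ computation to odd weight $k$. By Proposition \ref{prop:Glift}, $G_f(Z(W))$ equals $(4\pi)^{1-k}$ times the sum over $z\in Z(W)$ of the regularized theta integrals $\lim_{T\to\infty}\int_{\Fc_T} f(\tau)\,(R^{k-1}_{\tau,0}\Theta_L)(\tau;z)\,d\mu(\tau)$. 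Interchanging the finite sum over $Z(W)$ with the integral, I would first compute the toric average $\sum_{z\in Z(W)}\Theta_L(\tau;z)$ of the Siegel theta kernel.

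Under the identification $(V,Q)\cong(M_2(\Qb),\det)$ and the embedding $\iota\colon T\hookrightarrow\mathrm{H}\cong\mathrm{GSpin}_V$ fixed above, the cycle $Z(W)$ of \eqref{eq:bigCM1} is, by Proposition \ref{prop:classgp}, exactly the big CM cycle attached to $W$ and the compact subgroup $K_T$ in the sense of \cite{BKY12}, with associated Schwartz function $\phi=\phi_{\af_1,\af_2}$. The see-saw dual pair coming from $\mathrm{SO}(W)\hookrightarrow\mathrm{SO}(V)$, equivalently the Siegel--Weil argument of \cite{Schofer, BKY12}, then expresses this toric average --- restricted to the diagonal $\tau_1=\tau_2=\tau$ --- in terms of the incoherent Hilbert Eisenstein series $E((\tau_1,\tau_2),s,\phi)$ of parallel weight $1$ attached to $E/F$. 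Since $W$ is incoherent at the archimedean places, this Eisenstein series vanishes at $s=0$, so the leading contribution is carried by the derivative ${E^{*,}}'((\tau_1,\tau_2),0,\phi)$; this is the source of the normalization $1/\Lambda(0,\chi)$, the degree factor $|Z(W)|$, and the Fourier coefficients $a(t,\phi)$.

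Granting this, the regularized integral against $f$ is evaluated by the standard mechanism for singular theta lifts (cf.\ \cite{Borcherds98, Bruinier02}): once the toric-averaged kernel $R^{k-1}_{\tau,0}F$ is exhibited as $L_\tau$ of a form of weight $2k$, Stokes' theorem collapses the regularized integral to the principal part $\sum_m c_f(-m)q^{-m}$ of $f$ paired with the holomorphic Fourier coefficients of that form. For $k=1$ one has $R^0_{\tau,0}=\mathrm{id}$, the toric average of $\Theta_L$ is $L_\tau$ of a weight-$(1,1)$ form $\tilde F(\tau_1,\tau_2)$ (harmonic in each variable, with holomorphic coefficients the $a(t,\phi)$), and the pairing produces $\sum_m c_f(-m)\sum_{t\in S_m}a(t,\phi)=\sum_m c_f(-m)\,a_m(\phi;1)$ up to the overall constant --- precisely \cite{BKY12} (see also \cite[\S 3]{YY19}); the hypothesis $c_f(0)=0$ kills the constant term. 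For odd $k\ge 2$ I would check that the hypotheses of Proposition \ref{prop:Cohen} hold with this $F$ and $\tilde F$, so that $\Cc_{k-1}\tilde F$ is a preimage of $(4\pi)^{1-k}R^{k-1}_{\tau,0}F$ under $L_\tau$ on the diagonal, and run the same Stokes argument against the holomorphic coefficients of $\Cc_{k-1}\tilde F$. The Rankin--Cohen combinatorics defining $\Cc_{k-1}$ convert the coefficient $a(t,\phi)$ with $\Tr(t)=m$ into $a(t,\phi)$ weighted by $\sum_{\ell=0}^{k-1}(-1)^\ell\binom{k-1}{\ell}^2 t^\ell (t')^{k-1-\ell}$, and for $k$ odd the classical identity $\sum_{\ell=0}^{k-1}(-1)^\ell\binom{k-1}{\ell}^2 t^\ell (t')^{k-1-\ell}=(t+t')^{k-1}P_{k-1}\!\lp\tfrac{t-t'}{t+t'}\rp$ turns this into $m^{k-1}P_{k-1}\!\lp\tfrac{t-t'}{m}\rp$ on $S_m$ --- exactly the weight in $a_m(\phi;k)$ (the overall sign and the factor $\tfrac12$ coming out of the Siegel--Weil normalization and the Stokes computation); alternatively, for $k\ge 3$ one can cite the parallel weight-$k$ computations of \cite{Viazovska11, Li18, BEY} to pin down the constants.

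The main obstacle is the first step in general: identifying \eqref{eq:bigCM1} with the toric orbit of \cite{BKY12} when $d_1$ and $d_2$ are neither coprime nor fundamental. This is precisely what Proposition \ref{prop:classgp}, and behind it Lemma \ref{lemma:abelian}, is meant to supply, replacing the cleaner identification available in \cite{YY19} when $(d_1,d_2)=1$. A secondary, bookkeeping-type difficulty is to propagate all archimedean constants --- the $(4\pi)^{1-k}$ from Proposition \ref{prop:Glift}, the Siegel--Weil constant, $\Lambda(0,\chi)$, and the sign and power of $m$ produced by $\Cc_{k-1}$ and by Stokes' theorem --- consistently through the argument; these are fixed by specializing to the known case $k=1$ of \cite{BKY12} together with \cite{Viazovska11, Li18, BEY}.
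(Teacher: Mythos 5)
Your proposal is correct and follows essentially the same route as the paper: the paper's proof simply invokes Theorem 5.2 of \cite{BKY12} for $k=1$ (the identification of $Z(W)$ with the toric orbit via Proposition \ref{prop:classgp} having been made in \eqref{eq:bigCM1}), and for odd $k>1$ replaces the diagonal restriction of ${E^{*,}}'$ by $\Cc_{k-1}{E^{*,}}'$ via Proposition \ref{prop:Cohen}, noting that the $m$\tth holomorphic Fourier coefficient of $-\Cc_{k-1}{E^{*,}}'$ is $a_m(\phi;k)$, with details delegated to \cite{BEY}. You unpack the Schofer/Bruinier--Kudla--Yang mechanism (see-saw, Siegel--Weil, Stokes) and verify the Legendre-polynomial identity behind the Cohen-operator coefficients explicitly, but the underlying argument is the same.
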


\begin{rmk}
  \label{rmk:constant}
The constant $\frac{|Z(W)|}{2\Lambda(0, \chi)}$ is explicitly given by $\frac{w_1w_2 [H_1:K_1] [H_2:K_2]}{2h_1h_2[H_0:\Qb]}$
with $w_i := | \Oc_{K_i}^\times|$, $h_i$ the class number of $K_i$ and $H_0 := H_1 \cap H_2$. 
\end{rmk}

\begin{rmk}
  \label{rmk:singularity}
  For any $\gamma \in \Gamma_m$ and CM point $\zm \in Y(1)$ of discriminant $d$, the CM point $\gamma \cdot \zm \in Y(1)$ has discriminant $d'$ such that $dd'$ is a perfect square. Since the singularity of $G_f$ is supported on $\mathcal{T}_m \subset Y(1)^2$, it does not intersect the CM cycle $Z(W)$.
\end{rmk}
\begin{proof}
  When $k = 1$, this is just Theorem 5.2 in \cite{BKY12}.
When $k \ge 2$, this can be easily modified using the Cohen operator in Prop.\ \ref{prop:Cohen} (see e.g.\ Theorem 5.10 in \cite{BEY}).
For the convenience of the readers, we include some details here.
By applying Lemma 4.3 and Prop.\ 4.5 in \cite{BKY12} and Prop.\ \ref{prop:Cohen}, we obtain
$$
(4\pi)^{1-k} \Theta_L(\tau; Z(W)) = 
- \frac{|Z(W)|}{2\Lambda(0, \chi)}
L_\tau \mathcal{C}_{k-1}\lp  {E^{*,}}'((\tau_1, \tau_2), 0, \phi) \rp
$$
for any $k \ge 1$ odd. 
Substituting this into \eqref{eq:Glift} and apply Stokes' Theorem yields (c.f.\ proof of Theorem 5.2 in \cite{BKY12} and Theorem 5.10 in \cite{BEY})
$$
G_f(Z(W)) = 
- \frac{|Z(W)|}{2\Lambda(0, \chi)}
 \lim_{T \to \infty} \int_{ \mathcal{F}_T} 
d ( f\cdot \mathcal{C}_{k-1}(   {E^{*,}}' ) d \tau)
= \frac{|Z(W)|}{2} \mathrm{CT}(f \mathcal{C}_{k-1}(\mathcal{E}^+_L)),
$$
where 
$$
\mathcal{E}_L^+(\tau_1, \tau_2) = \frac{1}{\Lambda(0, \chi)} \sum_{m \in \Nb,~ t \in S_m} a(t, \phi) \ebf(t \tau_1 + t' \tau_2)
$$ 
is the holomorphic part of $E'((\tau_1, \tau_2), 0, \phi) = \frac{{E^{*,}}'((\tau_1, \tau_2), 0, \phi)}{\Lambda(0, \chi)}$.
We are now done after applying the identity
\begin{align*}
  \mathcal{C}_{k-1} \ebf(t \tau_1 + t' \tau_2) &= 
\lp \sum_{\ell = 0}^{k-1} (-1)^\ell \binom{k-1}{\ell}^2  t^\ell (t')^{k-1-\ell}\rp 
\ebf((t + t') \tau)\\
&= P_{k-1} \lp \frac{t - t'}{t + t'} \rp (t + t')^{k-1} \ebf((t + t') \tau),
\end{align*}
which comes from the definitions of $\mathcal{C}_{k-1}$ in Prop.\ \ref{prop:Cohen}, $P_{k-1}$ in \eqref{eq:Pn}, and the fact that $k-1$ is even.
\end{proof}

\subsection{Small CM Points}
Suppose $D = d_1d_2$ is a perfect square. Then $K_1$ and $K_2$ are the same quadratic field $K$ of fundamental discriminant $d < 0$.
Let $z_i = x_i + i y_i  \in \Hb \cap K$ be a representative of $\zm_i$.
Then $y_1y_2 \in \Qb$ and the CM point $(z_1, z_2)$ arises from a rational splitting of $(V, Q) = (M_2(\Qb), \det)$. To be more precise, let $W \subset V$ be the rational, negative 2-plane spanned by the rational vectors $\Re Z(z_1, z_2) , {\sqrt{|d|}} \Im Z(z_1, z_2)  \in V$, where
\begin{equation}
  \label{eq:Z}
  Z(z_1, z_2) :=  \pmat{z_1}{-z_1z_2}{1}{-z_2}
\in V(\Cb).
\end{equation}
Then the element $W \otimes \Rb$ in the Grassmannian of $V( \Rb)$ corresponds to the points $z_0^+ := (z_1, z_2) \in \Hb^2$ and $z_0^- := (\overline{z_1}, \overline{z_2}) \in (\Hb^-)^2$.

On the level of lattice, denote $L = M_2(\Zb) \subset V$ and consider a finite index sublattice
$$
L_0 := 
L_+ \oplus L_- \subset L,~ 
L_+ := L \cap W^\perp,~
L_- := L \cap W.
$$
The holomorphic theta function 
$$
\theta_{L_+}(\tau) := 
 \sum_{\mu_1 \in L_+^\vee/L_+} \ef_{\mu_1} \theta_{L_+, \mu_1}(\tau),~
\theta_{L_+, \mu_1}(\tau) := \sum_{\lambda \in L_+ + \mu_1} q^{Q(\lambda)}
$$
is a vector-valued holomorphic modular form of weight 1 with respect to the Weil representation $\rho_{L_+}$. 
On the other hand, the incoherent Eisenstein series 
$$
{E^{*,}}'(\tau, 0, L_-)  = \sum_{\mu_2 \in L^\vee_-/L_-}
{E^{*,}}'(\tau, 0, \phi_{\mu_2}) \ef_{\mu_2}
$$ 
is a real-analytic, elliptic modular form of weight 1 with respect to the Weil representation $\rho_{L_-}$. 
Their tensor product is a real-analytic modular form of weight 2 with respect to $\rho_{L_0}$. 
The following function on $\Hb^2$
\begin{equation}
  \label{eq:tensor}
  \tilde{F}(\tau_1, \tau_2; L_+, L_-) := 
\sum_{\mu = (\mu_1, \mu_2) \in L/L_0 \subset L_0^\vee/L_0} \theta_{L_+, \mu_1}(\tau) {E^{*,}}'(\tau, 0, \phi_{\mu_2}).
\end{equation}
satisfies $\tilde{F} \mid_{1, 1}(\gamma, \gamma) = \tilde{F}$ for all $\gamma \in \Gamma$.

The torus $T_W$, whose $R$-points are $(R \otimes K)^\times$ for any $\Qb$-algebra $R$, is embedded into the algebraic group $\mathrm{H}$ defined in \eqref{eq:H} through the map $\iota = (\iota_1, \iota_2): T_W \hookrightarrow \mathrm{H}$ defined by
\begin{align*}
\iota_i \lp a + b\frac{y_1y_2}{\sqrt{d}} \rp := a \pmat{1}{}{}{1} + (-1)^{i+1} b \frac{y_{3-i}}{\sqrt{|d|}} \pmat{x_i}{-|z_i|^2}{1}{-x_i}
\end{align*}
for $i = 1, 2$.
Simple routine calculations then show that $\iota^{-1}(\mathrm{H}(\Zb)) = \Oc_{d'} \subset K$ with $d' := -d_1 d_2/(d_1, d_2)^2 < 0$ the largest negative discriminant divisible by $d_1$ and $d_2$. 
The class group $T_W(\Qb) \backslash T_W(\Ab_f)/\hat{\Oc}_{d'}$ is just the class group of the order $\Oc_{d'}$. 
Therefore, the small CM 0-cycle $Z(W) := T_W(\Qb) \backslash (\{z_0^\pm\} \times T_W(\Ab_f)/\hat{\Oc}_{d'})$ on $Y(1)^2$ becomes 
\begin{equation}
  \label{eq:ZW2}
Z(W) = 
\sum_{\sigma \in \Cl(d')} (\zm_1, \zm_2)^\sigma + (-\overline{\zm_1}, -\overline{\zm_2})^\sigma.  
\end{equation}
We can now apply Theorem 1.1 in \cite{Schofer} to give a formula for $G_f(Z(W))$ when $f \in M^!_0$, and generalize it to higher Green's function as in Theorem \ref{thm:GKZ1}. 
\begin{thm}
\label{thm:GKZ2}
  Let $f \in M_{2-2k}^!$ with $k \ge 1$ odd and vanishing constant term if $k = 1$. Suppose the singularity of $G_f$ does not intersect $Z(W)$. Then 
  \begin{equation}
    \label{eq:GKZ2}
G_f(Z(W)) = \frac{|Z(W)|}{\Lambda(0, \chi)} \sum_{m \ge 1} c_f(-m)     \kappa(m; k) 
  \end{equation}
where $\kappa(m; k)$ is the $m$\tth Fourier coefficient of $\Cc_{k-1} \tilde{F}(\tau_1, \tau_2; L_+, L_-)$.
\end{thm}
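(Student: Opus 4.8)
The plan is to run the argument for Theorem~\ref{thm:GKZ1} with the real quadratic field $F$ replaced by $\Qb$ and the big CM cycle replaced by the small CM $0$-cycle of \eqref{eq:ZW2}. We start, as there, from the integral representation of Proposition~\ref{prop:Glift}: summing over the points of $Z(W)$,
\[
G_f(Z(W)) = (4\pi)^{1-k}\lim_{T\to\infty}\int_{\Fc_T} f(\tau)\,(R^{k-1}_{\tau,0}\Theta_L)(\tau;Z(W))\,d\mu(\tau).
\]
Here the hypothesis that the singularity of $G_f$ avoids $Z(W)$ plays the role that Remark~\ref{rmk:singularity} played for big CM points: it guarantees that the integrand is smooth along $\Fc$ and that the regularized integral converges, with no contribution arising from $\mathcal{T}_m$.

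The case $k=1$ is \cite[Thm.~1.1]{Schofer} once one substitutes the description \eqref{eq:ZW2} of $Z(W)$. Concretely, the rational splitting $V = W^\perp\oplus W$ and the finite-index inclusion $L_0 = L_+\oplus L_-\subset L$ factor the Siegel theta function, as a sum over $L/L_0$, into the holomorphic weight-$1$ theta function $\theta_{L_+}$ of the positive-definite plane $W^\perp$ times the theta function of the negative-definite plane $W$ at $z_0^\pm$. Averaging the latter over the CM torus $T_W(\Qb)\backslash T_W(\Ab_f)/\hat{\Oc}_{d'}\cong\Cl(d')$ and invoking the Siegel--Weil formula identifies it, after regularization, with the value at $s=0$ of the incoherent weight-$1$ Eisenstein series $E^*(\tau,s,L_-)$. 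Since this Eisenstein series is incoherent it vanishes at $s=0$, so Schofer's first-term/regularization argument replaces the theta integral above by the regularized integral of $f$ against $F(\tau_1,\tau_2;L_+,L_-)$ from \eqref{eq:tensor}, up to the constant $\frac{|Z(W)|}{\Lambda(0,\chi)}$ coming from the normalizations of the average and of the completed Eisenstein series. A Stokes' theorem argument then evaluates this integral into the constant term at the cusp of $f\cdot F(\tau,\tau;L_+,L_-)$, which, as $f(\tau)=\sum_{m\ge1}c_f(-m)q^{-m}+O(1)$ has vanishing constant term, equals $\sum_{m\ge1}c_f(-m)\kappa(m;1)$ with $\kappa(m;1)$ the $m$\tth Fourier coefficient of $-F(\tau,\tau;L_+,L_-) = -\Cc_0 F$.

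For odd $k\ge3$ we argue exactly as in the proof of Theorem~\ref{thm:GKZ1}. The form $F(\tau_1,\tau_2;L_+,L_-)$ is holomorphic in $\tau_1$ and a harmonic Maass form in $\tau_2$ (the derivative at $s=0$ of an incoherent weight-$1$ Eisenstein series is harmonic), so the hypotheses of Proposition~\ref{prop:Cohen} hold, and we may replace the diagonal restriction $F(\tau,\tau;L_+,L_-)$ used in the $k=1$ argument by $\Cc_{k-1}F(\tau_1,\tau_2;L_+,L_-)$; by the identity in Proposition~\ref{prop:Cohen} this is exactly what matches $R^{k-1}_{\tau,0}\Theta_L$ rather than $\Theta_L$ in the theta integral. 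The Stokes' theorem step then produces $\frac{|Z(W)|}{\Lambda(0,\chi)}\sum_{m\ge1}c_f(-m)\kappa(m;k)$ with $\kappa(m;k)$ the $m$\tth Fourier coefficient of $-\Cc_{k-1}F(\tau_1,\tau_2;L_+,L_-)$, which is \eqref{eq:GKZ2}. Further details of this higher-weight version are in \cite{BEY}.

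The main obstacle is the Siegel--Weil/regularization step of the $k=1$ case: matching the toric average of the split Siegel theta function at $s=0$ with the derivative ${E^{*,}}'(\tau,0,L_-)$ and pinning down the constant $\frac{|Z(W)|}{\Lambda(0,\chi)}$. This is precisely what \cite{Schofer} supplies, together with the identification \eqref{eq:ZW2} of $Z(W)$ that rests on Proposition~\ref{prop:classgp}. The passage to odd $k\ge3$ is then essentially formal once Proposition~\ref{prop:Cohen} applies; the only points needing attention are the harmonicity of $F$ in each variable and the fact that inserting $R^{k-1}_{\tau,0}$ disturbs neither the convergence of the regularized integral nor the boundary term — both handled as in the proof of Theorem~\ref{thm:GKZ1}.
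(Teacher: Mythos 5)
Your proposal follows essentially the same route as the paper, which proves this statement simply by invoking Schofer's Theorem 1.1 for $k=1$ together with the small CM cycle description \eqref{eq:ZW2}, and for odd $k\ge 3$ by replacing the diagonal restriction of $F$ with $\Cc_{k-1}F$ via Proposition \ref{prop:Cohen}, exactly as in the proof of Theorem \ref{thm:GKZ1}, with details deferred to \cite{BEY}. Two small slips, neither affecting the argument since you defer to \cite{Schofer} for the mechanism: the Siegel--Weil formula identifies the toric average of the theta function of $W$ with the value at $s=0$ of the \emph{coherent} Eisenstein series (the incoherent one vanishes there, and only its derivative ${E^{*,}}'(\tau,0,L_-)$ enters $F$), and the identification \eqref{eq:ZW2} comes from $T_W(\Qb)\backslash T_W(\Ab_f)/\hat{\Oc}_{d'}\cong \Cl(d')$ rather than from Proposition \ref{prop:classgp}, which concerns the big CM case.
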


By Remark \ref{rmk:singularity}, it is possible for $Z(W)$ to intersect $\mathcal{T}_m$.
We will give a simple criterion to see when this happens.

\begin{lemma}
  \label{lemma:intersection}
  For $m \ge 1$, the CM cycle $Z(W)$ intersects the divisor $\mathcal{T}_m$ if and only if there exists $\lambda \in L_+$ such that $Q(\lambda) = m$, i.e.\ the $m$\tth Fourier coefficient of $\theta_{L_+, 0}(\tau)$ is positive.
\end{lemma}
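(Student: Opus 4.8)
My approach is to describe $\mathcal{T}_m$ and $Z(W)$ explicitly inside $\Hb^2$, reduce the intersection question to one computation with the isotropic vector $Z(z_1,z_2)$ of \eqref{eq:Z}, and then use Galois descent to pass between the distinguished representative $z_0^+=(z_1,z_2)$ and its conjugates. By \eqref{eq:Tc} and the definition of $\Gamma_m$, the preimage of $\mathcal{T}_m$ in $\Hb^2$ is the union $\bigcup_{\gamma}\{(w_1,w_2)\in\Hb^2:w_1=\gamma w_2\}$ over integral matrices $\gamma$ of determinant $m$ — only positive determinants occur, because $w_1,w_2\in\Hb$ forces $\Im w_1=\tfrac{\det\gamma}{|cw_2+d|^2}\Im w_2>0$ — and under $V=(M_2(\Qb),\det)$, $L=M_2(\Zb)$ the index set $\{\gamma\in M_2(\Zb):\det\gamma=m\}$ is exactly $\{\lambda\in L:Q(\lambda)=m\}$. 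Next I would record the elementary identity: writing $(\,\cdot\,,\,\cdot\,)$ for the bilinear form attached to $Q$ and expanding determinants, $(\lambda,Z(w_1,w_2))=-aw_2+dw_1-b+cw_1w_2$ for $\lambda=\pmat{a}{b}{c}{d}$, so $(\lambda,Z(w_1,w_2))=0$ precisely when $w_1=\tfrac{aw_2+b}{cw_2+d}=\lambda w_2$. Since a rational $\lambda$ is orthogonal to $Z$ if and only if it is orthogonal to both $\Re Z$ and $\Im Z$, i.e.\ to the plane $W$, this shows that $z_0^+$ lies on $\mathcal{T}_m$ if and only if there is $\lambda\in L\cap W^{\perp}=L_+$ with $Q(\lambda)=\det\lambda=m$; as $Q$ is positive definite on $L_+$, this is the positivity of the $m$-th Fourier coefficient of $\theta_{L_+,0}(\tau)=\sum_{\lambda\in L_+}q^{Q(\lambda)}$. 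Since $z_0^+$ is the term $(\zm_1,\zm_2)$ of $Z(W)$ in \eqref{eq:ZW2}, this already settles the ``if'' direction.

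For the ``only if'' direction I would use that both objects are defined over $\Qb$. The divisor $\mathcal{T}_m$ is the zero locus of $\varphi_m(j(z_1),j(z_2))$ with $\varphi_m\in\Zb[X,Y]$, hence is stable under $\Gal(\overline{\Qb}/\Qb)$; and by \eqref{eq:ZW2} every point occurring in $Z(W)$ is a $\Gal(\overline{\Qb}/\Qb)$-conjugate of $z_0^+=(\zm_1,\zm_2)$, since the points $(\zm_1,\zm_2)^{\sigma}$ are Galois conjugates by Shimura reciprocity (the $T_W$-action is, via class field theory, the Galois action of $\Gal(H_{d'}/K)=\Cl(d')$, as $\iota^{-1}(\mathrm{H}(\Zb))=\Oc_{d'}$) and the $(-\overline{\zm_1},-\overline{\zm_2})^{\sigma}$ are their complex conjugates. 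Hence if any point of $Z(W)$ lies on $\mathcal{T}_m$, then applying the corresponding Galois element and using that $\mathcal{T}_m$ is Galois-stable shows $z_0^+$ itself lies on $\mathcal{T}_m$, and the computation above produces the required $\lambda\in L_+$ with $Q(\lambda)=m$.

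The steps needing care are all in the first paragraph: getting the bilinear form of $Q=\det$, the Möbius action, and the positivity of $\det\gamma$ mutually consistent with the normalization of $Z(z_1,z_2)$ and of $L=M_2(\Zb)$ in \eqref{eq:Z}, and checking that the $\Gamma_m$ in play here is the full set of integral matrices of determinant $m$, so that imprimitive vectors are allowed and ``$\lambda\in L_+$ with $Q(\lambda)=m$'' — rather than a primitivity-restricted variant — is precisely the right condition. The Galois descent is robust: it uses only that the support of $Z(W)$, as displayed in \eqref{eq:ZW2}, is a single $\Gal(\overline{\Qb}/\Qb)$-orbit, which is built into the construction of the small CM cycle in \cite{Schofer} and \cite{BKY12}.
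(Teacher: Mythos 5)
Your proof is correct, and its core coincides with the paper's: both rest on identifying the preimage of $\mathcal{T}_m$ in $\Hb^2$ with the locus where some $\lambda \in L$ with $Q(\lambda)=m$ is orthogonal to $Z(z_1',z_2')$, so that membership of $z_0^+$ in $\mathcal{T}_m$ becomes the condition that there is $\lambda \in W^\perp \cap L = L_+$ with $Q(\lambda)=m$, i.e.\ positivity of the $m$\tth coefficient of $\theta_{L_+,0}$. The paper simply quotes this description ($\mathcal{T}_m$ is a special cycle, the Hirzebruch--Zagier picture), whereas you verify it by the explicit computation $(\lambda, Z(w_1,w_2)) = -aw_2 + dw_1 - b + cw_1w_2$; that computation is right, and your remarks about only positive determinants occurring and about imprimitive matrices being allowed are the correct reading of $\Gamma_m$. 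Where you genuinely diverge is in passing from ``some point of $Z(W)$ lies on $\mathcal{T}_m$'' to ``$z_0^+$ lies on $\mathcal{T}_m$'': the paper leaves this implicit in the special-cycle formalism (at a point $[z_0^\pm, h]$ of the toric orbit the relevant condition is $\lambda \in W^\perp \cap h\hat{L}h^{-1}$ with $Q(\lambda)=m$, and since $T_W$ acts trivially on $W^\perp$ this is again the condition $\lambda \in L_+$, so the criterion is uniform over the cycle), while you argue by Galois descent, using that $\mathcal{T}_m$ is cut out by $\varphi_m \in \Zb[X,Y]$ and that by \eqref{eq:ZW2} the support of $Z(W)$ consists of $\Gal(\overline{\Qb}/\Qb)$-conjugates of $(\zm_1,\zm_2)$ together with their complex conjugates (and complex conjugation is itself such an automorphism once the embedding into $\Cb$ is fixed). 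Both justifications are valid: the toric-invariance argument is internal to the quadratic-space setup and needs no rationality of $\mathcal{T}_m$, while your Galois argument is more elementary but leans on the identification of the toric orbit with the Galois orbit that is already encoded in \eqref{eq:ZW2}.
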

\begin{proof}
  The divisor $\mathcal{T}_m \subset Y(1)^2$ is an example of special cycle. 
In particular, its preimage in $\Hb^2$ is the following $\Gamma \times \Gamma$-invariant set
$$
\left\{
(z_1', z_2') \in \Hb^2: \text{ there is } \lambda \in L \text{ such that } Z(z_1', z_2') \perp \lambda \text{ and } \det(\lambda) = m
\right\}.
$$
Note if $Y(1)^2$ is replaced by a Hilbert modular surface, then the analogue of $\mathcal{T}_m$ is the Hirzebruch-Zagier divisor.
From this description, we know that $Z(W)$ intersects $\mathcal{T}_m$ if and only if there exists $\lambda \in L$ satisfying $Q(\lambda) = m$ and $\lambda \perp Z(z_1, z_2)$, i.e.\ $\lambda \in W^\perp \cap L = L_+$. 
The lemma is now clear.
\end{proof}
We can now give an explicit expression for $\kappa(m ;k)$. 
\begin{prop}
  \label{prop:FE}
Suppose the $n_1^{\mathrm{th}}$ Fourier coefficient of $\theta_{L_+, \mu_1}(\tau)$ is $b(n_1, \mu_1)$ for $n_1 \ge 0, 
\mu_1 \in L^\vee_+/L_+$. Then $\kappa(m; k)$ is given by
$$
\kappa(m; k) = 
 m^{k-1}
\lp b(m, 0)a(0, \phi_0)P_{k-1}(1) + 
\sum_{\begin{subarray}{c} \mu = (\mu_1, \mu_2) \in L/L_0\\ n_1 \ge 0,~ n_2 > 0\\ n_1 + n_2 = m\end{subarray}}
b(n_1, \mu_1) a(n_2, \phi_{\mu_2})
P_{k-1} \lp \frac{n_1 - n_2}{m} \rp \rp,
$$
where $a(n_2, \phi_{\mu_2})$ is the $n_2^\text{th}$ Fourier coefficient of the incoherent Eisenstein series ${E^{*,}}'(\tau, 0, \phi_{\mu_2})$.
Furthermore, $c_f(-m)b(m, 0) = 0$ for all $m \ge 1$ if and only if the singularity of $G_f$ does not intersect $Z(W)$. 
\end{prop}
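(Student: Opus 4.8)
The plan is to make $\kappa(m;k)$ — the $m$th (holomorphic) Fourier coefficient of $-\Cc_{k-1}F(\tau_1,\tau_2;L_+,L_-)$ from Theorem \ref{thm:GKZ2} — explicit by feeding the Fourier expansions of the two factors in \eqref{eq:tensor} into the Cohen operator. Since $\Cc_{k-1}$ is linear and \eqref{eq:tensor} is a finite sum over $\mu=(\mu_1,\mu_2)\in L/L_0$, it suffices to handle a single product $g(\tau_1)h(\tau_2)$ with $g=\theta_{L_+,\mu_1}$ holomorphic of weight $1$ and $h={E^{*,}}'(\cdot,0,\phi_{\mu_2})$ a harmonic weak Maass form of weight $1$, whose non-holomorphic part is supported on Fourier indices $\le 0$. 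Applying $\partial_{\tau_1}^{\ell}\partial_{\tau_2}^{k-1-\ell}$, restricting to $\tau_1=\tau_2=\tau$, and using $\partial_{\tau}^{r}q^{n}=(2\pi i n)^{r}q^{n}$, the holomorphic part of the $m$th Fourier coefficient of $\Cc_{k-1}(g\otimes h)$ becomes $(-2\pi i)^{1-k}(2\pi i)^{k-1}\sum_{n_1+n_2=m,\ n_1,n_2\ge 0}b(n_1,\mu_1)a(n_2,\phi_{\mu_2})\sum_{\ell=0}^{k-1}(-1)^{\ell}\binom{k-1}{\ell}^{2}n_1^{\ell}n_2^{k-1-\ell}$, the non-holomorphic part of $h$ (times the holomorphic $\partial_{\tau_1}^{\ell}g$) contributing only non-holomorphic terms to the $m$th coefficient. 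Note the leading constant is $(-2\pi i)^{1-k}(2\pi i)^{k-1}=(-1)^{1-k}=1$ since $k$ is odd.

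The key algebraic input is the classical identity, valid for even $n$, $\sum_{\ell=0}^{n}(-1)^{\ell}\binom{n}{\ell}^{2}x^{\ell}y^{n-\ell}=(x+y)^{n}P_{n}\!\left(\tfrac{x-y}{x+y}\right)$, so that with $x=n_1$, $y=n_2$, $n=k-1$ the inner sum is $m^{k-1}P_{k-1}\!\left(\tfrac{n_1-n_2}{m}\right)$. Summing over $\mu\in L/L_0$ and negating then gives $\kappa(m;k)=-m^{k-1}\sum_{\mu}\sum_{n_1+n_2=m,\ n_1,n_2\ge0}b(n_1,\mu_1)a(n_2,\phi_{\mu_2})P_{k-1}\!\left(\tfrac{n_1-n_2}{m}\right)$. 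To match the stated form I would isolate the $n_2=0$ part: since $L_0=L_+\oplus L_-$ with $L_+=L\cap W^{\perp}$, an element $\mu\in L/L_0$ with $\mu_2=0$ must have $\mu_1=0$, i.e.\ $\mu=0$, and the constant term of ${E^{*,}}'$ is carried by $\phi_0$; hence the $n_2=0$ contribution is the single summand $b(m,0)a(0,\phi_0)P_{k-1}(1)$ (from $\mu=0$, $n_1=m$), leaving the displayed sum over $n_1\ge0$, $n_2>0$, with $b(0,\mu_1)=\delta_{\mu_1,0}$ absorbing the $n_1=0$ boundary.

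For the last sentence I would combine Proposition \ref{prop:Glift} with Lemma \ref{lemma:intersection}: $G_f$ has a logarithmic singularity along $\mathcal{T}_m$ iff $c_f(-m)\neq0$, and $Z(W)$ meets $\mathcal{T}_m$ iff the $m$th Fourier coefficient $b(m,0)$ of $\theta_{L_+,0}$ — a representation number, hence $\ge0$ — is positive. So the singular locus of $G_f$ meets $Z(W)$ iff there is some $m\ge1$ with $c_f(-m)\neq0$ and $b(m,0)>0$, equivalently $c_f(-m)b(m,0)\neq0$ for some $m\ge1$; the contrapositive is the assertion.

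I expect the principal difficulty to be the bookkeeping of the non-holomorphic contributions rather than the combinatorics: one must justify that differentiating the non-holomorphic part of ${E^{*,}}'(\tau_2,0,\phi_{\mu_2})$ and restricting to the diagonal contributes nothing to the holomorphic part of the $m$th Fourier coefficient (so that only $a(n_2,\phi_{\mu_2})$ with $n_2\ge0$ enter, and the constant-term piece is clean), pin down the precise normalization, and use the notion of "holomorphic Fourier coefficient" compatible with the proof of Theorem \ref{thm:GKZ2}. For these points I would rely on the structural descriptions of the incoherent weight-$1$ Eisenstein series in \cite{Schofer}, \cite{BKY12} and \cite{BEY}, together with the remark after Proposition \ref{prop:Cohen} permitting $\partial_{\tau_i}^{r}$ to be replaced by raising operators.
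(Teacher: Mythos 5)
Your proposal is correct and follows essentially the same route as the paper: expand \eqref{eq:tensor} in Fourier series, apply the definition of $\Cc_{k-1}$ together with the binomial--Legendre identity $\sum_{\ell}(-1)^\ell\binom{k-1}{\ell}^2 n_1^\ell n_2^{k-1-\ell}=m^{k-1}P_{k-1}\bigl(\tfrac{n_1-n_2}{m}\bigr)$, isolate the $n_2=0$ boundary term via the vanishing of $a(0,\phi_{\mu_2})$ for $\mu_2\neq 0$ (your additional observation that $\mu_2=0$ forces $\mu_1=0$ in $L/L_0$ is a point the paper leaves implicit), and deduce the last claim from Proposition \ref{prop:Glift} and Lemma \ref{lemma:intersection}, exactly as in the paper's proof. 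The only part you defer --- checking that the non-holomorphic terms of ${E^{*,}}'$ do not pollute the holomorphic $m$th coefficient --- is likewise left implicit in the paper, so there is no substantive divergence.
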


\begin{proof}
From the definition of $\Cc_{k-1}$ and \eqref{eq:tensor}, one can derive the formula for $\kappa$ with sum over $n_i \ge 0$.
If $(n_1, n_2) = (m, 0)$, then we know from Definition 2.16 in \cite{Schofer} that $a(0, \phi_{\mu_2}) = 0$ for all $\mu_2 \neq 0 \in L_-^\vee/L_-$.
This proves the first claim.
The second part follows from Prop.\ \ref{prop:Glift} and Lemma \ref{lemma:intersection}.
\end{proof}
\section{Proof of Theorem \ref{thm:main}}
\label{sec:proof}
When $d_1d_2$ is not a perfect square, Prop.\ \ref{prop:classgp} and Theorem \ref{thm:GKZ1} imply that\begin{align*}
&2 \log \Nm_{H/\Qb} \varphi_m(j(\zm_1) - j(\zm_2)) \\
= & 
2 \log \Nm_{H/K} \lp \varphi_m(j(\zm_1) - j(\zm_2))
\varphi_m(j(-\overline{\zm_1}) - j(\zm_2))
\varphi_m(j(\zm_1) - j(-\overline{\zm_2}))
\varphi_m(j(\zm_1) - j(-\overline{\zm_2}))\rp\\
 =& -G_{J_m}(Z(W)) =   -G^m_1(Z(W)) =   
\frac{|Z(W)|}{2\Lambda(0, \chi)}
 \sum_{t \in S_m} -a(t, \phi).
\end{align*}
For $k = 3, 5, 7$, the space $M^!_{2-2k}$ has a basis $\{f_{k, m}(\tau) = q^{-m} + O(1): m \ge 1\}$ since the space of cusp forms of weight $2k$ is trivial. 
We can specialize Theorem \ref{thm:GKZ1} to $f = f_{k, m}$ to obtain
$$
-m^{1-k} G_{f_{k, m}}(Z(W)) = 
-G_{k}^m(Z(W)) = 
 \frac{|Z(W)|}{2\Lambda(0, \chi)}
\sum_{t \in S_m}
 P_{k-1}\lp \frac{t - t'}{m} \rp (-a_{}(t, \phi)),
$$
for $k = 3, 5, 7$ and any $m \ge 1$.

By Prop.\ \ref{prop:nonneg}, we can easily deduce
\begin{equation}
\label{eq:bound1}
  \begin{split}
2     \log |\Nm_{H/\Qb}\varphi_m(j(\zm_1), j(\zm_2))|   &\ge
\frac{|Z(W)|}{2\Lambda(0, \chi)}
  \sum_{t \in S_m}
  P_{k-1} \lp \frac{t - t'}{m}  \rp (-a(t, \phi))\\
&= -G^m_k(Z(W))
  \end{split}
\end{equation}
since $P_{k-1}(r) \le 1$ for all $r \in[-1, 1]$.
To bound the term $- G_k^m(Z(W))$ from below, we can first apply its definition to write
\begin{align*}
  - G_k^m(Z(W)) &=  \sum_{(z_1, z_2) \in Z(W)} \sum_{\gamma \in \Gamma \backslash \Gamma_m} - G_k(z_1, \gamma z_2) =
  \sum_{(z_1, z_2) \in Z(W) \cap \Fc^2,~ \gamma \in \Gamma_m}
2 Q_{k-1}(\cosh \mathrm{d}(z_1, \gamma z_2) ).
\end{align*}
If we denote $\mathrm{d}_2$ the distance on $\Hb^2$ associated to the product Riemannian metric, then 
\begin{align*}
2 \mathrm{d}_2((z_1, z_2), (z, \gamma z))^2 &= 
2 \lp \mathrm{d}(z_1, z)^2 + \mathrm{d}(z_2, \gamma z)^2 \rp 
= 2 \lp \mathrm{d}(\gamma z_1, \gamma z)^2 + \mathrm{d}(z_2, \gamma z)^2 \rp \\
&\ge (\mathrm{d}(\gamma z_1, \gamma z) + \mathrm{d}( z_2, \gamma z))^2  \ge \mathrm{d}(\gamma z_1, z_2)^2
\end{align*}
for any $\gamma \in \mathrm{PSL}_2(\Rb)$ by the triangle inequality.
Therefore for any $\epsilon > 0$, $(z_1, z_2) \in Z(W)\cap \Fc^2 \cap T_{m, \epsilon} \subset \Hb^2$ implies that there exists $\gamma \in \Gamma_m$ such that $\mathrm{d}(z_1, \gamma z_2) < \sqrt{2} \epsilon$, i.e.\ $Q_{k-1}(\cosh \mathrm{d}(z_1, \gamma z_2)) > Q_{k-1}(\cosh(\sqrt{2} \epsilon))$.
Combining this with \eqref{eq:bound1} and set $k = 3$ finishes the proof.

If $d_1d_2$ is a perfect square, the same argument goes through with Theorem \ref{thm:GKZ1} replaced by Theorem \ref{thm:GKZ2} and Prop.\ \ref{prop:FE}. 
Note that $\varphi_m(j(\zm_1), j(\zm_2)) \neq 0$ is equivalent to that the cycle $Z(W)$ does not intersect the singularity of $G_{J_m}$, which has the same support as that of $G_{f_{k, m}}$. 
This finishes the proof of Theorem \ref{thm:main}.

\bibliography{singular}{}
\bibliographystyle{amsplain}

\end{document}